\numberwithin{equation}{section}
\newcommand{\eps}{\varepsilon}
\newcommand{\om}{\omega}
\newcommand{\Om}{\Omega}
\newcommand{\vth}{\vartheta}
\renewcommand{\th}{\theta}
\newcommand{\D}{\mathbb{D}}
\newcommand{\z}{\zeta}
\def\erre{\mathbb{R}}
\def\enne{\mathbb{N}}
\def\zzz{\mathbb{Z}}
\newcommand\ci{\mathbb{C}}
\newcommand\R{\mathbb{R}}
\newcommand\C{\mathbb{C}}
\def\ee{{\rm e}}
\def\arg{{\rm Arg}\,}
\newcommand\fU{{f_{U}}}
\renewcommand{\d}{\textup{d}}
\newcommand{\cC}{{\mathcal{C}}}
\newcommand{\cN}{{\mathcal{N}}}
\newcommand{\cU}{{\mathcal{U}}}
\newcommand{\cZ}{{\mathcal{Z}}}
\newcommand{\pa}{\partial}
\newcommand{\ov}{\overline}
\newcommand{\dyle}{\displaystyle}
\newtheorem{theorem}{Theorem}[section]
\newtheorem{remark}[theorem]{Remark}
\newtheorem{lemma}[theorem]{Lemma}
\newtheorem{proposition}[theorem]{Proposition}
\newtheorem{corollary}[theorem]{Corollary}
\begin{document}

\title[Generic 2D competing systems]{Generic configurations in 2D strongly competing systems}
\author[Lanzara, Montefusco, Nesi \& Spadaro]{Flavia Lanzara, 
Eugenio Montefusco, \\ Vincenzo Nesi \& Emanuele Spadaro}
\address{Dipartimento di Matematica, Sapienza Università di Roma,
piazzale Aldo Moro 5, I-00185 Roma Italy}
\date{}
\maketitle

%\tableofcontents

\begin{abstract}
We study a problem modelling segregation of an arbitrary number 
of competing species in planar domains.
The solutions give rise to a well known free boundary problem 
with the domain partitioning itself into subdomains occupied 
by different species.
Several subdomains can coexist in a neighborhood of any point. However, we show that {\it generically} (in the sense of Baire category) only triple junctions appear, meaning 
that at most three subdomains meet at their common free boundary. 
Our main tools are the use of the formalism of harmonic maps 
into singular spaces and the introduction of a complex structure 
via the Hopf differential. 

MSC classification: 35Bxx, 35J47, 35R35.
\end{abstract}

\section{Introduction}

For any positive natural number \(N\geq 2\), we consider the metric space
\[
\Sigma_N := \big\{t e_i : i=1, \ldots, N,\; t\geq 0\big\}\subset\R^N,
\]
endowed with the induced geodesic distance ($e_i$ running over the standard basis).
Let $\Omega\subset\ci$ be a bounded simply connected domain. 
In the spirit of the pioneering work of Gromov and Schoen \cite{gs}, we consider the harmonic maps $U\in H^1(\Omega; \Sigma^N)$, which are the stationary points of the Dirichlet energy
\begin{equation}\label{e.dirichlet energy}
E(U):=\frac12 \int_{\Omega} |\nabla U|^2\,dx.
\end{equation}
In this paper we study the geometry of the \emph{generic} (in the sense of Baire category) harmonic maps with values into $\Sigma_N$.
To this aim, for given $N$ we introduce the set \(\cU_N(\Omega)\subset H^1(\Omega;\Sigma_N)\) of all the stationary points of \eqref{e.dirichlet energy} and we set
\begin{equation}\label{e.U}
\cU(\Omega):=\bigcup_{N\geq 2} \Big\{u_1+\cdots + u_N : (u_1,\ldots , u_N)\in \cU_N(\Omega)\Big\}
%\quad\textup{and}
\end{equation}
and
\[
\overline{\cU(\Omega)}:=H^1\textup{-closure of } \cU(\Omega).
\]
%and $\overline{\cU(\Omega)}$ its $H^1$-closure.

%For every $U\in \cU_N(\Omega)$, we write $U=(u_1, \ldots, u_N)$ to denote its components $u_j:\Omega\to [0,+\infty)$.
It is well-known (see Corollary \ref{c.reg1}) that for every $U=(u_1, \ldots, u_N)\in \cU_N(\Omega)$ each component $u_j$ is locally Lipschitz continuous and, by the very definition, it is harmonic in its domains
of positivity
\[
-\Delta u_j = 0 \quad\textup{in}\quad \omega_j:=\{u_j>0\}
\qquad \forall \ j=1,..., N.
\]
The multiplicity at a point $z\in \Omega$ is defined as the positive integer
\begin{equation}\label{e.multiplicity m}
m_U(z):=\lim_{r\to 0^+}\#\{j: |\om_j\cap \D_r(z)|>0\},
\end{equation}
where $\D_r(z)$ denotes the open disk centered in \(z\) and 
radius \(r\) and $\#$ the cardinality. When $m_U(z)=m$ we call $z$ 
an \(m\)-point.
The interesting points in our analysis are those with multiplicity 
at least $3$. The \(1\)-points are the interiors of the positivity regions $\omega_j$, whereas the \(2\)-points are the locally smooth interfaces between exactly
two such regions (see \S \ref{s.3}):
\[
m(z) = 2 \quad \Longrightarrow \quad z \in 
\partial\omega_i \cap \partial\omega_j \quad i\neq j.
\]
The set of $m$-points with $m\geq 3$ in $\Omega$ is  discrete and can be regarded as the singular set of the harmonic maps $U:\Omega\to \Sigma_N$.

\medskip

The aim of this paper is to classify the structures of the singular set of the \emph{generic} harmonic maps $\cU(\Omega)$.
Leaving apart the cases $N=2, 3$ which are not relevant to the present study, a classification has been 
completed for \(N=4\) in \cite{lm1}, where the authors prove that two configurations are possible, namely  the case of a single \(4\)-point, 
or configurations with two \(3\)-points (see Figure \ref{figtype}).

\begin{figure}[h!]
\begin{center}
\begin{tikzpicture}
\draw[thick] (0,0) circle (2cm);
\draw (-2,0) -- (2,0);
\draw (0,-2) -- (0,2);
\draw (-2,0) circle (3pt);
\draw (2,0) circle (3pt);
\draw (0,-2) circle (3pt);
\draw (0,2) circle (3pt);
\filldraw[black] (0,0) circle (2pt);
\end{tikzpicture}
\qquad
\begin{tikzpicture}
\draw[thin] (0,0) circle (2cm);
 \coordinate (P1) at (0.3,0.3);
 \coordinate (P2) at (-0.3,-0.3);
\draw (-2,0) -- (P2);
\draw (0,-2) -- (P2);
\draw (2,0) -- (P1);
\draw (0,2) -- (P1);
\draw (P1) -- (P2);
\draw (-2,0) circle (3pt);
\draw (2,0) circle (3pt);
\draw (0,-2) circle (3pt);
\draw (0,2) circle (3pt);
\filldraw[black] (P1) circle (2pt);
\filldraw[black] (P2) circle (2pt);
\end{tikzpicture}
\end{center}
\caption{Four species: configurations with one 4-point 
(on the left) and two 3-points (on the right).}
\label{figtype}
\end{figure}
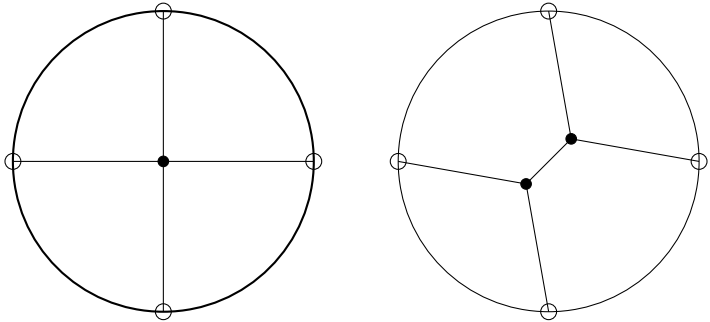

As explained in \cite{lm1}, when \(N=4\) the existence of \(4\)-points requires 
an additional condition and, therefore, it is not a stable configuration. This suggests that the 
solutions with \(4\)-points lie on a Banach manifold with finite 
co-dimension, and therefore are not generic in the space of solutions.

In this paper we show that the result obtained for \(N=4\) is paradigmatic of the general case $N> 3$. We prove that the configurations where the 
subdomains meet exclusively on \(2\)-points or \(3\)-points are residual in the sense of Baire category. A direct consequence is that the points with higher multiplicity
are unstable and can be perturbed away by small variations.

\medskip

To state our main result we introduce the set of functions in $\cU(\Omega)$ corresponding to harmonic maps $U\in \cU_N(\Omega)$ with at most triple junctions:
\begin{align}
\mathcal{U}_N^{\curlyvee}(\Omega):=
\Big\{U=(u_1, \ldots, u_N)\in \cU_N(\Omega),
 \ m_U(z)\leq 3, \ \forall \ z\in \Omega \Big\}
\end{align}
and 
\begin{align}
\mathcal{U}^{\curlyvee}(\Omega)
:=&\bigcup_{N\geq 2}
\Big\{u_1+\cdots +u_N  \ :\  (u_1, \ldots, u_N)\in \cU_N^{\curlyvee}(\Omega) \Big\} \subset  \cU(\Omega).
\end{align}

Our main result is the following.

\begin{theorem}\label{t:main}
For $\Omega\subset \ci$ a bounded simply connected domain, $\mathcal{U}^{\curlyvee}(\Omega)\subset \overline{\cU(\Omega)}$ is residual in the sense of Baire category.
%the set o functions in $\cU(\Omega)$ corresponding to harmobe the set of harmonic maps defined in \eqref{e.U}.
%with values in the singular metric space $\Sigma_N$ for $N\geq 2$. 
%Then, the set of functions $u_1+\cdots +u_N\in \cU(\Omega)$ with 
%functions in $\cU(\Omega)$ defined in \eqref{e.U}, with multiplicity at most $3$ for all $z\in \Omega$,
%$U=(u_1, \ldots, u_N) : \Omega\to \Sigma_N$ ($N\in \mathbb{N}$) harmonic map such that $m_U(z)\leq 3$ for all $z\in \Omega$, 
%is residual (in the sense of Baire category) in the $H^1$-closure of $\cU(\Omega)$.
\end{theorem}

\subsection{A model for competing species}
Problem \eqref{e.dirichlet energy} is linked to the following
competition-diffusion system of \(N\) differential equations
\begin{equation}\label{diffeqk}
\begin{array}\{{ll}.
-\Delta u_j=-\mu u_j \dyle\sum_{k\neq j} u_k
	& \hbox{ in }\Omega, \\
u_j\geq 0 & \hbox{ in }\Omega, \\
u_j=g_j & \hbox{ on } \pa\Omega,
\end{array} \qquad j=1,\ldots,N,
\end{equation}

This system can be viewed as a simplified model for the study of the
segregation phenomena, when one interpretes \(u_j\) in \eqref{diffeqk}
as the density of the \(j\)-th specie.
From the PDEs point of view, this is an obstacle problem with unilateral constraints $u_j\geq 0$ and Dirichlet boundary values
\begin{equation*}%\label{e.g}
g_j\geq 0, \quad g_j\, g_k=0, \quad
G:=g_1+\cdots+g_N \neq 0 \quad \hbox{ $\mathcal{H}^1$-a.e.}
\end{equation*}
The assumption $g_j\geq 0$ not identically zero is  motivated by the model.
The parameter \(\mu>0\) quantifies
the interaction strength among the populations.

In \cite{ctv2, wz} the existence and uniqueness of the solutions of
\eqref{diffeqk} are shown.
Problem \eqref{e.dirichlet energy} is the limiting case of \eqref{diffeqk} when
the parameter \(\mu\) tends to \(+\infty\), see \cite{ctv2}.
In the last decade the qualitative properties of the solutions have been the
object of an intensive study, see
\cite{ab,bz,ckl,cr,cl,clll,ctv0,ctv1,ctv2,ctv3,ctv4,dwz,lm1,lm2,sz,wz} and the
references therein.

The connection between \eqref{e.dirichlet energy} and \eqref{diffeqk} arises because, as it has been proved in \cite{ctv2}, the one parameter solutions to the
reaction-diffusion system \eqref{diffeqk} converge, as
\(\mu\to+\infty\), to the unique solution of the variational problem \eqref{e.dirichlet energy} with the same boundary conditions.
% From our viewpoint the interesting fact is that the limit densities of
% \eqref{diffeqk} segregate.
% Hence, a pattern formation arises and the common nodal set, that is the set
% where $U=0$, can be seen as a free boundary.

\subsection{Geometry of nodal set}
In this paper, we depart from the well-studied approach of fixing the boundary values.
We introduce a novel method to analyze the nodal set of a typical map within the space of all solutions, regardless of the possible traces of $H^1(\Omega; \Sigma_N)$-maps.

Our starting point is the work \cite{lm1}, where for \(N=4\) the possible
\(4\)-point configurations are characterized by means of the solution of
a Dirichlet problem for the Laplace equation. The necessary and sufficient
conditions on the datum which generates a \(4\)-point suggest that in the
most common configurations only \(3\)-points appear.
The geometry of the solutions $\cU(\Omega)$ for any \(N\) was already studied in \cite{lm2}. In particular, when the multiplicity of each singular
point is even, it is proved that the solution is the absolute value of the harmonic
function which assumes the value \(\sum_{j=1}^{N}(-1)^j g_j\) on \(\pa\Omega\).

The connection to the harmonic functions envisaged in \cite{lm1,lm2}
suggests to introduce a global approach based on complex analytic functions.
We consider the whole set \(\cU(\Omega)\) of harmonic maps
for varying boundary values and varying \(N\).
Following a well-known idea
long exploited in harmonic map theory (see, e.g., \cite{Schoen}), we
associate to each solution \(U\in\cU\) its Hopf differential
\[
f_U:= U_z^2 =
\textstyle{\frac14} 
\big(U_{x_1}^2-U_{x_2}^2
-2iU_{x_1}U_{x_2}\big), \qquad z=x_1+i x_2.
\]
It follows from the inner variations
of the Dirichlet energy that \(f_U\) is holomophic for every
\(U\in\mathcal{U}\) (see Proposition \ref{p.dez hol}). Moreover,
the points \(z\) with multiplicity \(m_U(z)\geq 3\) are zeros of
the holomorphic function \(f_U\) and, specifically, the multiplicity
\(3\)-points are in one to one correspondence with the simple zeros
of \(f_U\):
\[
m_U(z) = 3 \qquad \Longleftrightarrow \qquad \textup{ord}(f_U; z) =1.
\]
The problem of the genericity of multiplicity at most \(3\)-points
is therefore translated to the genericity of Hopf differentials
with simple zeros. Although holomorphic functions with zeros with
order \(1\) are clearly residual (actually an open dense set), the
set of Hopf differentials is itself very non-generic.
It is therefore necessary to study in detail the range of the map
\(I:\cU\to\mathcal{H}\) associating to each map $U\in \cU(\Omega)$ its Hopf differential \(I(U):=U_z^2\).
The heuristic idea is to consider an appropriate primitive
\[
U = 2 \int f^{1/2}.
\]
This procedure may end up with functions which are not in $\cU(\Omega)$. Actually, any $f$ belonging to the image of the Hopf differential $I$
is characterized by a system of equations (cf. \eqref{e.parti reali}
in Proposition \ref{p:from f to U}) and, hence, the residuality needs
to be proven for holomorphic functions solving suitable differential
constraints. The example discussed in \S \ref{ex.no U} shows
the high degree of rigidity of such conditions, suggesting that there are only finitely
many directions for perturbing a Hopf differential with higher order zeros into one with simple roots.
However, a careful and detailed analysis shows that this can be achieved through an iterative argument.

We think that
our approach is prototypical of a variety of other contexts where generically singularities with reduced complexity (such as triple junctions in the plane) are expected.

\begin{figure}[h!]
\begin{center}
\begin{tikzpicture}
\draw[thick] (0,0) circle (2cm);
  \coordinate (A) at (0.618, 1.90211);
    \coordinate (B) at (-1.61803, 1.17557);
     \coordinate (C) at (-1.61803, -1.17557);
    \coordinate (D) at (0.618, -1.90211);
     \coordinate (E) at (2,0);
\draw (0,0) -- (A);
\draw (0,0) -- (B);
\draw (0,0) -- (C);
\draw (0,0) -- (D);
\draw (0,0) -- (E);
\draw (A) circle (3pt);
\draw (B) circle (3pt);
\draw (C) circle (3pt);
\draw (D) circle (3pt);
\draw (E) circle (3pt);
\filldraw[black] (0,0) circle (2pt);
\end{tikzpicture}
\qquad
\begin{tikzpicture}
\draw[thick] (0,0) circle (2cm);
  \coordinate (A) at (0.618, 1.90211);
    \coordinate (B) at (-1.61803, 1.17557);
     \coordinate (C) at (-1.61803, -1.17557);
    \coordinate (D) at (0.618, -1.90211);
     \coordinate (E) at (2,0);
      \coordinate (P1) at (0.6,0.5);
       \coordinate (P2) at (0.08,-0.08);
         \coordinate (P3) at (-0.7,-0.1);
\draw (P1) -- (A);
\draw (P3) -- (B);
\draw (P3) -- (C);
\draw (P2) -- (D);
\draw (P1) -- (E);
\draw (P1) -- (P2);
\draw (P2) -- (P3);
\draw (A) circle (3pt);
\draw (B) circle (3pt);
\draw (C) circle (3pt);
\draw (D) circle (3pt);
\draw (E) circle (3pt);
\filldraw[black] (P1) circle (2pt);
\filldraw[red] (P2) circle (2pt);
\filldraw[green] (P3) circle (2pt);
\end{tikzpicture}
\caption{Five species: configuration with one 5-point (on the left);
configuration with three 3-points (on the right).}
\label{figtype2}
\end{center}
\end{figure}
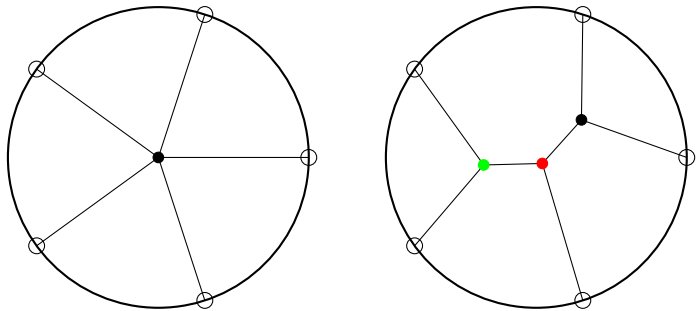

\subsection{Structure of the paper}
The paper is so organized. In Section \ref{s.2} after recalling some basic facts and
known results of the set \(\cU(\Omega)\), we give a characterization of the functions belonging to the Bergman space of integrable holomorphic functions which are Hopf differentials of stationary maps $U:\Omega \to \Sigma_N$ for any $N$.
Section \ref{s.3} is then entirely devoted to the proofs of the results
stated in Section \ref{s.2}. Next Section \ref{s:nodalset} contains
some results on the structure of the free boundary of the functions in \(\cU(\Omega)\)
and a generalization of the index formula proved in \cite{lm2} on the number of critical points counted with multiplicity. Section \ref{s:desingularization} deals with
the main analytical step of the genericity result, namely the desingularizing
procedure of a higher order zero point.
We need to take into account the linear
algebra constraints coming from the above hinted rigidity, in addition
to the global structure of holomorphic functions.
By reducing the order of the zeros one by one, after a finite
iteration, one can prove that Hopf differentials with simple zeros
are dense, thus providing the principal ingredient for the main
theorem (Figure \ref{figtype2} schematically shows the
desingularization of a $5$-point).

Finally, in Section \ref{s:generic} we prove
the main genericity result for harmonic maps \(\cU(\Omega)\) with points of multiplicity at most \(3\).

\section{Holomorphic functions and segregated states}\label{s.2}

We recall some basic facts about solutions to \eqref{e.dirichlet energy}.
For $N\geq 2$ the set of nontrivial stationary maps $U:\Omega\to \Sigma_N$ which are stationary for \eqref{e.dirichlet energy} is
characterized (see \cite{ctv2,ctv3,wz}) by the following analytical conditions:
\[
\cU_N(\Omega)=
\left\{
\begin{array}{c}
U=(u_1, \ldots, u_N) \hbox{ : } u_j\in H^1(\Omega),\\
\Delta u_j\geq 0 , \quad
\Delta \big(u_j-\sum_{k\neq j} u_k \big)\leq 0,\\
 u_j\geq0, \; u_j\not\equiv 0,\;
u_j\cdot u_k=0 \hbox{ a.e., } j\neq k \\
\end{array} \right\}.
\]
The set \(\cU_1(\Omega)\) is made of positive harmonic functions and
plays no role in our analysis.

\begin{remark}\label{r.indeterminato}
For $U\in \cU_N(\Omega)$ defined in \eqref{e.U}, it may happen that the nodal region $\omega_j =\{u_j>0\}$ (which will be shown to be an open set)
has an arbitrary number of connected components.
In the case the number of connected components of $\{U>0\}$ is finite, we assume that
\[
\omega_j =\{u_j>0\} \quad \text{is connected}
\quad \forall\; j=1, \ldots, N.
\]
Such condition is not restrictive, and can be achieved by a suitable relabeling.
%In the sequel, we identify the representations $U=(u_1, \ldots, u_N)$ and $U=u_1 +\cdots + u_N$.
\end{remark}
Let $\mathcal{H}(\Omega)$ be the set of the holomorphic functions and
\[
\mathcal{A}^1(\Omega):=\mathcal{H}(\Omega)\cap L^1(\Omega)
\]
its Bergman space. For \(f\in\mathcal{H}(\Omega)\) we
set \(\mathcal{Z}_f:=\{z\in \Omega:f(z)=0\}\).
We use the Wirtinger derivatives
\begin{align*}
& U_z:=\tfrac{1}{2} \left(  U_{x_1}
  -i U_{x_2} \right),\quad U_{\ov{z}}:=\tfrac{1}{2}
  \left(U_{x_1} +iU_{x_2} \right)\in L^2(\Omega;\ci).
\end{align*}

%instead of \(\mathcal{U}_N(\D)\), \(\mathcal{U}(\D)\), \(\mathcal{H}(\D)\), \(\mathcal{A}^1(\D)\) respectively.

% In this section we show that there exists a one to one correspondence
% between the set of solutions to \eqref{e.dirichlet energy} and a proper
% subset of holomorphic functions of the disc.
% The characterization of the holomorphic functions involved in this
% correspondence will occupy most of the section.

\subsection{The map \(I\)} A crucial role will be played by the Hopf differential.
The main result is contained in the following proposition.

\begin{proposition}\label{p.dez hol}
Let $\Omega\subset \ci$ be an open set. If \(U\in\cU(\Omega)\)  then \(U_z^2\in \mathcal{A}^1(\Omega)\).
\end{proposition}

We denote \(I:\mathcal{U}(\Omega)\to\mathcal{A}^1(\Omega)\) the map associating to each $U\in \mathcal U(\Omega)$ its Hopf differential \(I(U)=U_z^2\).
The proof is postponed to \S \ref{proof21}. A straightforward
consequence of Proposition \ref{p.dez hol} is that every  \(U\in \mathcal U(\Omega)\) is a locally Lipschitz
continuous in \(\Omega\), because its Wirtinger derivative is locally bounded (see 
Corollary \ref{c.reg1}).

The map \(I\) is not surjective, its range is actually a very small set of holomorphic 
functions. Characterizing \(I(\cU(\Omega))\) as a subset of \(\mathcal{A}^1(\Omega)\) is one of the 
main points of our analysis. To this aim we recall some elementary facts about
holomorphic functions.
Since the problem is invariant under conformal change of variables, if $\Omega$ is a bounded simply
connected domain, then we can assume, without loss of generality, that
\[
\Omega\equiv \D =\{z\in \ci : |z|<1\}.
\]
Therefore, we omit the dependence on the domain in \(\mathcal{U}_N\), \(\mathcal{U}\), \(\mathcal{H}\),
\(\mathcal{A}^1\).
The zero set of a non-constant holomorphic function 
\(f:\D\longrightarrow\ci\) is discrete. Therefore, the set
\(\mathcal{Z}^{\textup{odd}}_f\) of the zeros of \(f\) with odd order is at most 
countable. We write
\begin{equation}\label{e:ipotesi zeri}
\mathcal{Z}^{\textup{odd}}_f = \big\{z_1, z_2,\ldots \big\}.
\end{equation}
We fix a family of non-intersecting closed segments $L_j$ with an 
endpoint in \(z_j\) and the other on \(\pa\D\). Let
\(P\in\enne\cup\{+\infty\}\) be the number of such segments. We set
\begin{equation}\label{e:D tilde}
\Omega^- := \D\setminus \bigcup_{j=1}^P L_j.
\end{equation}
with the convention that if \(P=0\) then \(\Omega^-=\D\). 

\begin{figure}[h]
\begin{tikzpicture}
\draw[thick] (0,0) circle (20mm);
\draw (0.2,0.2) -- (0,2);
\draw (1,.5) -- (.55,1.91);
\draw (-.5,.5) -- (-1.628,1.168);
\draw (-.4,-.3) -- (-1.81,-.814);
\draw (.35,-1.25)--(0,-2);
\draw (0,0) circle (1pt);
\filldraw[black] (0.2,0.2) circle (1pt);
 \node at (0.45,.3) {$z_1$};
\filldraw[black] (1,.5) circle (1pt);
   \node at (1.25,.5) {$z_4$};
\filldraw[black] (-.5,.5) circle (1pt);
  \node at (-.5,.7) {$z_3$};
\filldraw[black] (-.4,-.3) circle (1pt);
  \node at (-.2,-.3) {$z_2$};
\filldraw[black] (.35,-1.25) circle (1pt);
  \node at (.6,-1.3) {$z_{5}$};
%\filldraw[black] (1.6,-.5) circle (1pt);
%  \node at (1.7,-0.3) {$z_M$};
%  \node at (1.6,-1.7) {$\Omega_r$};
  \node at (1.7,1.8) {$\D$};
\end{tikzpicture}
\caption{The set \(\Omega^-\)}\label{Fcut}
\end{figure}
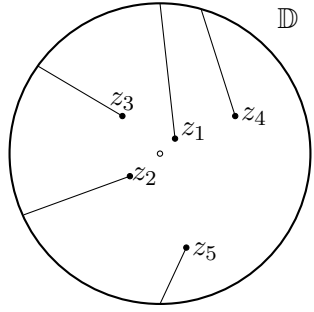

Since $f$ has no odd zeros in the simply connected domain $\Omega^-$, hence there exist exactly two holomorphic functions $\pm f^{1/2}:\Omega^-\longrightarrow\ci$
such that $(\pm f^{1/2})^2=f$.

We fix a point \(z_0\in\Omega^-\) and we set
\begin{equation}\label{prootnew}
F_{z_0,f}(z):=2\dyle\int_{z_0}^z f^{1/2}(\zeta)\, d\zeta.
\end{equation}

\begin{remark}
Both \(f^{1/2}\) and \(F_{z_0,f}\) are holomorphic in \(\Omega\)
and extend continuously to \(\Om^-\cup\mathcal{Z}_f^{\textup{odd}}\).
In particular, we may choose \(z_0\in\mathcal{Z}_f^{\textup{odd}}\) in \eqref{prootnew}.
\end{remark}

The characterization of the map \(I:\cU\longrightarrow\mathcal{A}^1\) 
is given in terms of \(F_{z_0,f}\). 

\begin{proposition}\label{p:from f to U}
For $f\in \mathcal{A}^1$ the following are equivalent:
\begin{itemize}
\item[i.] there exists $U\in\mathcal{U}$ such that $I(U)=f$;
\item[ii.] there exists $z_0\in \Omega^-\cup \mathcal{Z}_f^{odd}$ such 
that $F_{z_0,f}$ satisfies
\begin{equation}\label{e.parti reali}
\Re F_{z_0,f}(z)=0\quad \forall\;z\in \mathcal{Z}_f^{\textup{odd}}.
\end{equation}
\end{itemize}
Moreover, if \textup{ii.} holds, then $|\Re F_{z_0,f}|\in\mathcal{U}$
and $I(|\Re F_{z_0,f}|) = f$.
\end{proposition}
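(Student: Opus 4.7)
\emph{Plan.} My strategy is to work on the simply connected slit domain $\Omega$ where $f^{1/2}$ is single-valued, so that $F:=F_{z_0,f}$ is a holomorphic primitive of $2f^{1/2}$, and to compare a candidate $U$ with $\pm\Re F$ piecewise. The arithmetic core is a relation on each cut $L_j$: since the two sides of $L_j$ carry the two branches of $f^{1/2}$, integrating from $z_j$ on each side yields
\[
F^{+}(q)+F^{-}(q)=2F(z_j),\qquad q\in L_j,
\]
whence $\Re F^{+}+\Re F^{-}=2\Re F(z_j)$. Condition (ii) is precisely what is needed to make $|\Re F|$ continuous across every cut.

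For (ii)$\Rightarrow$(i), I would define $U:=|\Re F_{z_0,f}|$ on $\Omega$. The displayed identity together with (ii) gives $\Re F^{+}=-\Re F^{-}$ on each $L_j$, hence $|\Re F^{+}|=|\Re F^{-}|$, and $U$ extends to a continuous function on $\D$ with $U(z_j)=0$ at every odd zero. Next, $U_z=\pm F'/2=\pm f^{1/2}$ on each positivity component, so $U_z^{2}=f$, i.e.\ $I(U)=f$. Finally, with $u_k:=U\,\mathbf{1}_{\omega_k}$ for $\omega_k$ the connected components of $\{U>0\}$, each $u_k$ is harmonic inside $\omega_k$, vanishes on $\partial\omega_k$, and has distributional Laplacian a nonnegative measure on $\partial\omega_k$; the dual inequality $\Delta(u_k-\sum_{l\ne k}u_l)\le 0$ follows because at $2$-point interfaces the positive jumps of $|\nabla U|$ from the two adjacent $u_l$'s cancel, yielding a nonpositive measure overall. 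Note $\{U>0\}$ must have at least two components, since $\Re F(z_0)=0$ forces the nodal set to be non-empty and it is a union of analytic arcs as the zero set of a non-constant harmonic function.

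For (i)$\Rightarrow$(ii), take $U\in\cU$ with $I(U)=f$. On each simply connected component $V$ of $\omega_k\cap\Omega$, harmonicity of $U$ yields a holomorphic conjugate, so $U=\Re G_V$ with $(G_V')^{2}=4U_z^{2}=4f=(F')^{2}$; hence $G_V=\epsilon_V F+c_V$ for some sign $\epsilon_V\in\{\pm 1\}$ and a complex constant whose real part I call $c_V\in\R$, giving $U=\epsilon_V\Re F+c_V$ on $V$. Continuity of $U$ across a nodal arc with positivity on both sides forces the transition law $\epsilon_{V'}=-\epsilon_V$ and $c_{V'}=-c_V$; continuity at any multiplicity point $z_j\in\partial V$ (where $U(z_j)=0$, automatic at every odd zero of $f$) gives $c_V=-\epsilon_V\Re F(z_j)$. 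Propagating these rules along chains of nodal-arc adjacencies one finds $\Re F(z_j)=\Re F(z_{j'})$ for any two multiplicity points, the piece-adjacency graph being connected since any two pieces can be joined by a path in $\Omega$ crossing $\{U=0\}$ only transversely along nodal arcs. Hence $\Re F$ is constant on $\mathcal{Z}_f^{\textup{odd}}$; taking $z_0=z_{j_0}\in\mathcal{Z}_f^{\textup{odd}}$ (if nonempty) normalises this constant to zero, and otherwise (ii) is vacuous and any $z_0\in\Omega$ works.

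The main obstacle is the bookkeeping of signs and constants: carefully deriving the cut identity $F^{+}+F^{-}=2F(z_j)$ from how $f^{1/2}$ flips across $L_j$, and then establishing connectivity of the piece-adjacency graph so that the local relation $c_V=-\epsilon_V\Re F(z_j)$ propagates into the global constancy of $\Re F$ on odd zeros. The subharmonic/PDE checks in the forward direction are relatively routine once continuity across the cuts is in hand; the moreover statement is an immediate consequence of the (ii)$\Rightarrow$(i) construction.
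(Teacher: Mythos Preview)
Your proposal is essentially correct and, for (ii)$\Rightarrow$(i), follows the paper almost verbatim: the cut identity $F^{+}+F^{-}=2F(z_j)$ and the resulting continuity of $|\Re F|$ is exactly Lemma~\ref{l:from f to U}. One small gap: your verification of the differential inequalities only treats $2$-point interfaces. At the isolated points of $\mathcal{Z}_f^{\textup{odd}}$ (which become multiplicity $\geq 3$ points of $U$) you still need to show that the distributional inequalities survive; the paper does this by a cutoff $\chi_s$ with $s\|\partial_z\chi_s\|_\infty+s^2\|\partial^2_{z\bar z}\chi_s\|_\infty\leq C$, using $U(z_j)=0$ to kill the boundary term. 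This is indeed routine, but it should be mentioned.

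For (i)$\Rightarrow$(ii) your route is genuinely different from the paper's. You work piecewise on the components of $\{U>0\}\cap\Omega$, write $U=\epsilon_V\Re F+c_V$ there, derive the transition law $(\epsilon,c)\mapsto(-\epsilon,-c)$ across nodal arcs, and then propagate the identity $c_V=-\epsilon_V\Re F(z_j)$ through the connected adjacency graph to conclude that $\Re F$ is constant on $\mathcal{Z}_f^{\textup{odd}}$. The paper instead packages this into a single comparison (Proposition~\ref{p:from U to U}): it picks $z_0\in\{U=0\}$, sets $V:=|\Re F_{z_0,f_U}|$, observes $V_z^2=U_z^2$ with $V(z_0)=U(z_0)=0$, and shows $U\equiv V$ by a unique-continuation argument on the components of $\{U>0\}\cap\{V>0\}$. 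The paper's method is shorter and yields the stronger conclusion $U=|\Re F_{z_0,f_U}|$ directly (which it reuses in Proposition~\ref{p:unique}); your method is more explicit about the sign/constant bookkeeping and makes transparent why the condition \eqref{e.parti reali} is exactly the obstruction. Note also that you do not actually need the components $V$ to be simply connected: from $U_z^2=((\Re F)_z)^2$ and continuity the sign $\epsilon_V$ is constant on any connected $V$ (zeros of $f$ being discrete), so $U-\epsilon_V\Re F$ has vanishing gradient and is constant; no harmonic conjugate is required.
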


The proof is postponed to the \S \ref{proof23}.
If \(\cZ_f^{\textup{odd}}\) is empty, then ii. is always verified for every 
\(z_0\) and \(|\Re F_{z_0,f}|\in\cU\). In particular, the function \(U\) such 
that \(f=I(U)\) is not unique. Uniqueness, on the contrary, holds in the case 
\(\cZ_f^{\textup{odd}}\neq\emptyset\).

\begin{proposition}\label{p:unique}
Let \(f\in I(\cU)\).
\begin{itemize}
\item[i.] If $\mathcal{Z}_{f}^{\textup{odd}} \neq \emptyset$, 
then there exists a unique function $U\in\cU$ such that $I(U)=f$.
\item[ii.] If $\mathcal{Z}_{f}^{\textup{odd}}=\emptyset$, 
then
\(\big\{|\Re F_{z_0,f}| : z_0\in \D\big\} = I^{-1}(f)\).
\end{itemize}
\end{proposition}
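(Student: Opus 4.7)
The unifying tool is the factorization
\[
(U_z-V_z)(U_z+V_z)=U_z^2-V_z^2\equiv 0
\]
for any $U,V\in I^{-1}(f)$, combined with the identity principle on the components of $\D\setminus(\{U=0\}\cup\{V=0\})$. For the existence half of part~i, pick any $z_0\in\mathcal{Z}_f^{\textup{odd}}$: Proposition~\ref{p:from f to U} provides some $z_0'\in\Omega\cup\mathcal{Z}_f^{\textup{odd}}$ realising \eqref{e.parti reali}, and $F_{z_0,f}-F_{z_0',f}=-F_{z_0',f}(z_0)\in i\R$ by that very condition, so $\Re F_{z_0,f}=\Re F_{z_0',f}$ and hence $V:=|\Re F_{z_0,f}|\in\cU$ satisfies $I(V)=f$.

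For uniqueness, let $U\in I^{-1}(f)$. The first key step is that $U(z)=0$ for every $z\in\mathcal{Z}_f^{\textup{odd}}$: otherwise $U$ would be harmonic near $z$, $U_z$ holomorphic there, and $(U_z)^2=f$ would exhibit an odd-order zero at $z$, which is impossible for the square of a holomorphic function. On each connected component $C$ of $\D\setminus(\{U=0\}\cup\{V=0\})$, both $U_z$ and $V_z$ are holomorphic, so the factorization and the identity principle force either $U-V$ or $U+V$ to be constant on $C$. For a component $C$ adjacent to $z_0$, continuity and $U(z_0)=V(z_0)=0$ pin the constant to zero, and positivity of $U,V$ on $C$ yields $U=V$ on $C$. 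The propagation step is: if $U=V$ on $C$ and $C'$ shares a boundary arc $\gamma$ with $C$, then $\gamma\subset\{U=0\}\cap\{V=0\}$ (otherwise $U=V$ on $C$ would extend by continuity to $\gamma$ and give $0$ on one side against a positive value on the other), so the constant on $C'$ is again zero and $U=V$ on $C'$. The \emph{main obstacle} I anticipate is executing this propagation for an arbitrary topology of the nodal sets; I would handle it by joining every $z\in\D$ to $z_0$ via a path that crosses $\{U=0\}\cup\{V=0\}$ transversally and only along smooth $2$-point arcs, which is available because the higher-multiplicity points are discrete.

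For part~ii, the inclusion $\supseteq$ is immediate from Proposition~\ref{p:from f to U}: since $\mathcal{Z}_f^{\textup{odd}}=\emptyset$, $f^{1/2}$ is globally holomorphic on the simply connected $\D$, condition \eqref{e.parti reali} is vacuous, and $|\Re F_{z_0,f}|\in I^{-1}(f)$ for every $z_0\in\D$. For the reverse inclusion, fix $U\in I^{-1}(f)$ and set $H:=\Re F_{0,f}$, a globally well-defined harmonic function. On each nodal region $\omega$ of $U$ the identity principle applied to $U_z=\pm f^{1/2}=\pm H_z$ gives $U=\epsilon_\omega H+c_\omega$ on $\omega$ for some $\epsilon_\omega\in\{\pm 1\}$ and $c_\omega\in\R$. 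Across any shared interface of two nodal regions, the condition $U=0$ forces the common level value $H=-c_\omega/\epsilon_\omega=-c_{\omega'}/\epsilon_{\omega'}=:\alpha$; propagating along the connected adjacency graph of nodal regions pins down a single real $\alpha$ with $U=|H+\alpha|$ globally. Finally, since $U\in\cU_N$ for some $N\geq 2$ has at least one interface, picking $z_0$ on such an interface gives $\alpha=-H(z_0)$, whence $H+\alpha=\Re F_{z_0,f}$ and $U=|\Re F_{z_0,f}|$, as claimed.
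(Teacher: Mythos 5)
Your proof is correct and rests on the same core mechanism as the paper's: the dichotomy $U_z=\pm V_z$ (equivalently $U_z=\pm H_z$) on nodal regions, obtained from $U_z^2=V_z^2=f$, together with propagation of the sign from a common zero ($z_0\in\mathcal{Z}_f^{\textup{odd}}$ in part i, an interface point in part ii). The paper shortens the argument by delegating all of this propagation to the already-proven Proposition~\ref{p:from U to U} (any $U\in I^{-1}(f)$ equals $|\Re F_{z_0,f}|$ for any $z_0\in\{U=0\}$), so your inline re-derivation is the same idea in more words; note also a harmless sign slip in part ii, where with your definition $\alpha:=-c_\omega/\epsilon_\omega$ (the value of $H$ on the interface) one gets $U=|H-\alpha|$ and $\alpha=H(z_0)$, not $U=|H+\alpha|$ and $\alpha=-H(z_0)$.
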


The proof of the proposition is postponed to the \S \ref{proof25}.

\begin{remark}
Propositions \ref{p:from f to U} and \ref{p:unique} imply the commutativity 
of the following diagram:
\begin{equation}\label{diag}
\begin{picture}(150,33)(0,0)
\put(0,20){\(f_U\in\mathcal{A}^1\) }
\put(70,26){}
\put(45,23){\vector(1,0){65}}
\put(112,20){ \(F_U\in \mathcal{H}(\Omega^-)\) }
\put(90,-6){\(\pi\)}
\put(15,-10){\vector(0,1){25}}
\put(0,-20){\(U\in\mathcal{U} \)}
\put(3,-2){\(I\)}
\put(112,15){\vector(-3,-1){78}}
\end{picture}
\end{equation}
\vspace{3mm}

\noindent with \(f_U=I(U)\), \(F_U:=F_{z_0,f_U}\) with $z_0\in \Omega^-\cup \mathcal Z_f^{\textup{odd}}$ such that \(U(z_0)=0\) and
$\pi:\mathcal{H}(\Omega^-)\to \cU$ the nonlinear 
projection
\[
\pi(F)(z) := \left|\Re(F(z))\right|.
\]
\end{remark}

\section{Proofs of Propositions \ref{p.dez hol}, \ref{p:from f to U} and \ref{p:unique}}\label{s.3}

\subsection{Proof of Proposition \ref{p.dez hol}}\label{proof21}
The result is a direct consequence of the fact that
%the functions $U\in
$\mathcal{U}_N$ are the stationary points of the Dirichlet energy ${E}$ in \eqref{e.dirichlet energy} (see \cite{wz}).
% \[
% E(U) = \frac{1}{2}\int_{\D} |\nabla U(x_1,x_2)|^2 dx_1dx_2
% =\frac{1}{2}\int_{\D} \sum_{j=1}^N |\nabla u_j(x_1,x_2)|^2 dx_1dx_2,
% \]
Fix any $U\in \mathcal{U}$ and compute inner variations of ${E}$ at $U$: namely, for every smooth vector field $\lambda:\Omega\to\ci$ with compact support in $\Omega$,
we consider the diffeomorphism (for $|\eps|$ sufficiently small)
\[
\Phi_\eps:\Omega\to\Omega, \qquad \Phi_\eps(z):=z+\eps\lambda(z),
\]
and set $U^\eps := U\circ \Phi_\eps$.
By construction $\textup{supp}(U^\eps- U)\subset\subset\Omega$
for every $\eps$ sufficiently small and by stationarity
\begin{equation}\label{e:inner}
\frac{\d}{\d\eps}\Big\vert_{\eps=0} {E}(U^\eps)
=  0\qquad \forall\lambda\in C_c^1(\Omega).
\end{equation}
The fact that $f_U$ is holomorphic is a direct consequence 
of \eqref{e:inner}.
In order to compute the inner variations, we write the 
Dirichlet energy in terms of the Wirtinger operators:
\[
{E}(U^\eps) = \frac{i}{2}\int_{\Omega} \Big(|U^\eps_z|^2
+ |U^\eps_{\bar z}|^2\Big) \, \d z\wedge\d \bar z,
\]
Using the chain rule for $V\in H^{1}(\ci,\erre)$ and
\(\Phi:\ci\to \ci\) smooth
\begin{align*}
\big(V\circ \Phi\big)_z &:= (V_z \circ \Phi)\, \Phi_z 
  + (V_{\bar z} \circ \Phi)\, \bar \Phi_{z},\\
\big(V\circ \Phi\big)_{\bar z} &:= (V_z \circ \Phi)\, \Phi_{\bar z} 
  + (V_{\bar z} \circ \Phi)\, \bar \Phi_{\bar z},
\end{align*}
and the formulas $\overline{\Phi_z}=\bar \Phi_{\bar z}$ and 
$\overline{\Phi_{\bar z}}=\bar \Phi_{z}$, we can compute that 
\begin{align*}
 |\big(V\circ \Phi\big)_z|^2 
&:= |V_z \circ \Phi|^2\, \big(\Phi_z \overline{\Phi_z}
  +\Phi_{\bar z}\overline{\Phi_{\bar z}}\big)
    +(V_{z} \circ \Phi)^2 \;\Phi_z \Phi_{\bar z} \\
& \qquad + (\overline{V_{z} \circ \Phi})^2\; \overline{\Phi_z}\;
  \overline{\Phi_{\bar z}}, \\
 |\big(V\circ \Phi\big)_{\bar z}|^2 
&:= |V_{\bar z} \circ \Phi|^2\, \big(\Phi_z \overline{\Phi_z}
  +\Phi_{\bar z}\overline{\Phi_{\bar z}}\big) 
  + (\overline{V_{\bar z} \circ \Phi})^2 \;\Phi_z \Phi_{\bar z} \\
& \qquad + (V_{\bar z} \circ \Phi)^2\; \overline{\Phi_z}\;
  \overline{\Phi_{\bar z}},
\end{align*}
and summing up the two identities
\begin{align*}
  |\big(V\circ \Phi\big)_z|^2 +|\big(V\circ \Phi\big)_{\bar z}|^2 
&:= \Big[|V_z \circ \Phi|^2 + |V_{\bar z} \circ \Phi|^2\Big]\, 
  \big(\Phi_z \overline{\Phi_z}+\Phi_{\bar z}\overline{\Phi_{\bar z}}\big) \\
&\quad + 2 (V_{z} \circ \Phi)^2 \;\Phi_z \Phi_{\bar z}
  +2(V_{\bar z} \circ \Phi)^2\; \overline{\Phi_z}\;\overline{\Phi_{\bar z}}.
\end{align*}
We use the previous identity with $V=U$ and 
$\Phi(z)=\Phi_\eps(z)=z+\eps\lambda(z)$,
\[
\Phi_z =1+\eps\lambda_z, \qquad \Phi_{\bar z} =\eps\lambda_{\bar z},
\]
which implies 
\begin{align*}
  |U^\eps_z|^2 +|U^\eps_{\bar z}|^2 
&:= \Big[|U_z \circ \Phi_\eps|^2 + |U_{\bar z} \circ \Phi_\eps|^2\Big]\, 
  \big(1+\eps (\lambda_z+ \overline{\lambda_z}) + O(\eps^2)\big) \\
&\quad +2\eps \,(U_{z} \circ \Phi_\eps)^2 \;\big(\lambda_{\bar z}
  + O(\eps^2)\big) +2\eps\,(U_{\bar z} \circ \Phi)^2\; 
  \big(\overline{\lambda_{\bar z}}+ O(\eps^2)\big).
\end{align*}
We can then write
\begin{align*}
  E(U^\eps)
& = \frac{i}{2}\int_{\Omega} \Big(|U^\eps_z|^2
  + |U^\eps_{\bar z}|^2\Big) \, \d z\wedge\d \bar z \\
& = \frac{i}{2}\int_{\Omega}\Big[|U_z \circ \Phi_\eps|^2
  + |U_{\bar z} \circ \Phi_\eps|^2\Big]\, 
  \big(1+\eps (\lambda_z+ \overline{\lambda_z}) 
   +O(\eps^2)\big)\, \d z\wedge\d \bar z \\
&\quad +\eps i\int_{\Omega}\,(U_{z} \circ \Phi_\eps)^2 \;
  \big(\lambda_{\bar z}+ O(\eps^2)\big)\;\d z\wedge\d \bar z \\
&\qquad +\eps i\int_{\Omega}(U_{\bar z} \circ \Phi_\eps)^2\;
  \big(\overline{\lambda_{\bar z}}+ O(\eps^2)\big)\d z\wedge\d \bar z.
\end{align*}
We now make the change of variables $z= \Phi_\eps^{-1}(\zeta)$, 
which in turn implies
\begin{align*}
\d z \wedge \d \bar z 
  & = \left[(\Phi_\eps^{-1})_\zeta \, \d \zeta 
    +(\Phi_\eps^{-1})_{\bar \zeta}\, \d \bar\zeta\right]\wedge 
    \left[(\overline{\Phi_\eps^{-1}})_\zeta \, \d \zeta + 
    (\overline{\Phi_\eps^{-1}})_{\bar \zeta}\, \d \bar\zeta\right] \\
  & = \left[(\Phi_\eps^{-1})_\zeta \, (\overline{\Phi_\eps^{-1}})_{\bar \zeta}  
    -(\Phi_\eps^{-1})_{\bar \zeta}\,(\overline{\Phi_\eps^{-1}})_\zeta \right] 
    \d \zeta\wedge\d \bar\zeta.
\end{align*}
In particular, since 
$\Phi_\eps^{-1}(\zeta)=\zeta -\eps\lambda(\zeta) +O(\eps^2)$, we get 
\begin{align*}
  \d z \wedge \d \bar z 
& =  (1-\eps\lambda_{\zeta}+O(\eps^2))\,(1-\eps \bar\lambda_{\bar \zeta}
  +O(\eps^2))\;\d \zeta\wedge\d \bar\zeta \\
& \qquad-(-\eps\lambda_{\bar\zeta}+O(\eps^2))\,
  (-\eps\bar\lambda_{\zeta}
  +O(\eps^2))\;\d \zeta\wedge\d \bar\zeta \\
& =\big[ 1-\eps\big(\lambda_{\zeta}+\bar\lambda_{\bar \zeta}\big)
  +O(\eps^2)\big]\d \zeta\wedge\d \bar\zeta.
\end{align*}
We can then rewrite the Dirichlet energy with respect to the
variable $\zeta$:
\begin{align*}
  E
& (U^\eps) = \frac{i}{2}\int_{\Omega}\Big[|U_\zeta|^2 + |U_{\bar\zeta}|^2\Big]\,
  \big(1+\eps (\lambda_\zeta\circ \Phi_\eps^{-1} 
  +\overline{\lambda_\zeta}\circ \Phi_\eps^{-1}) + O(\eps^2)\big)\, \cdot \\
& \qquad\cdot\big[ 1-\eps\big(\lambda_{\zeta}
  +\bar\lambda_{\bar \zeta}\big)
  +O(\eps^2)\big] \d \zeta\wedge\d \bar \zeta \\
& \quad+  \eps i \int_{\Omega}\,(U_{\zeta})^2 \;
  \big(\lambda_{\bar \zeta} \circ \Phi_\eps^{-1}+ O(\eps^2)\big)\;
  \big[ 1-\eps\big(\lambda_{\zeta}
  +\bar\lambda_{\bar\zeta}\big)+O(\eps^2)\big] 
  \d \zeta\wedge\d \bar\zeta \\
& \quad +\eps i\int_{\Omega}\,(U_{\bar\zeta})^2\;
  \big(\overline{\lambda_{\bar \zeta}}\circ \Phi_\eps^{-1}
  + O(\eps^2)\big)\; \big[ 1-\eps\big(\lambda_{\zeta}
  +\bar\lambda_{\bar \zeta}\big)+O(\eps^2) \big] \d \zeta\wedge\d 
  \bar \zeta.
\end{align*}
Taking the derivative in $\eps=0$ of the energy $E(U^\eps)$
is now straightforward to infer that 
\begin{align}\label{e.inner var}
  \frac{\d}{\d\eps}\Big\vert_{\eps=0} E(U^\eps)
& = \frac{i}{2}\int_{\Omega}\Big[|U_\zeta|^2 + |U_{\bar \zeta}|^2\Big]\,
  (\lambda_\zeta +\overline{\lambda_\zeta})\,\d \zeta\wedge\d 
  \bar\zeta \notag \\
& \quad -\frac{i}{2}\int_{\Omega}\Big[|U_\zeta|^2 +|U_{\bar \zeta}|^2\Big]\,
  (\lambda_\zeta + \overline{\lambda_\zeta})\,
  \d \zeta\wedge\d \bar\zeta \notag \\
& \quad +i\int_{\Omega}\,U_{\zeta}^2 \;\lambda_{\bar\zeta}\;
  \d \zeta\wedge\d \bar\zeta
  +i\int_{\Omega}\,U_{\bar \zeta}^2\;\overline{\lambda_{\bar \zeta}}\;
  \d \zeta\wedge\d \bar\zeta \notag \\
& =2i \int_{\Omega}  \Re\big(U_{\zeta}^2 \;\lambda_{\bar \zeta}\big)\;
    \d \zeta\wedge\d \bar \zeta = 0 \qquad 
    \forall\lambda\in C_c^\infty(\Omega;\ci).
\end{align}
Recalling that $\lambda$ takes values in $\ci$, by its arbitrariness we conclude
that $(U_z^2)_{\bar z}=0$, i.e. $U_z^2$ satisfies the Cauchy-Riemann 
equations in a weak sense and therefore is holomorphic in $\Omega$, since $U_z^2\in L^1(\Omega)$.
\hfill\qed

\bigskip

A simple corollary is the Lipschitz continuity of the functions in $\mathcal U$.

\begin{corollary}
\label{c.reg1}
Let \(U\in \cU\), with \(U=u_1+\cdots + u_N\in\cU_N\) for some 
$N\geq 2$. Then, the functions $U$ and $u_j$ are locally Lipschitz 
continuous and the open sets $\omega_j :=\{u_j>0\}\subset \Omega$  have no connected component compactly
contained in $\Omega$.
\end{corollary}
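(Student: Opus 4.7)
The plan is to read off local Lipschitz continuity of $U$ directly from Proposition \ref{p.dez hol} and then transfer this regularity to each $u_j$ via a free-boundary argument, ruling out compactly contained components of $\omega_j$ by the maximum principle.

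For the Lipschitz regularity of $U$, since $U$ is real valued one has $U_{\bar z} = \overline{U_z}$, hence $|\nabla U|^2 = 4|U_z|^2 = 4|U_z^2|$. Proposition \ref{p.dez hol} says that $U_z^2 \in \mathcal{A}^1$ is holomorphic in $\D$, so it is locally bounded on compact subsets, and local Lipschitz continuity of $U$ follows at once.

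To pass from $U$ to the individual $u_j$, the key observation I would prove first is that $\partial \omega_j \cap \D \subset \{U = 0\}$. Indeed the open sets $\omega_j$ and $\omega_k$ must be disjoint for $k \neq j$ (otherwise their intersection would be a nonempty open set on which $u_j u_k > 0$, contradicting $u_j u_k = 0$ a.e.), so $\partial \omega_j \cap \omega_k = \emptyset$ for every $k$; since $\{U > 0\} = \bigcup_k \omega_k$, this places $\partial \omega_j \cap \D$ in $\{U = 0\}$ by continuity of $U$. I would then take the pointwise representative $u_j = U \chi_{\omega_j}$, which is continuous thanks to this vanishing along the free boundary, and establish local Lipschitz continuity by a three-case argument on pairs of points $x,y$ in a compact $K \subset \D$: if $x,y$ are both in $\omega_j$ or both outside $\omega_j$ the bound is inherited trivially from that of $U$, and for a mixed pair the segment from $x$ to $y$ meets $\partial \omega_j$ at some $z_0$ with $U(z_0) = 0$, which reduces $|u_j(x) - u_j(y)|$ to $|U(x) - U(z_0)| \le L|x-z_0| \le L|x-y|$.

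Finally, to exclude connected components $V$ of $\omega_j$ compactly contained in $\D$, I would combine the two distributional inequalities in the definition of $\cU_N$: on $\omega_j$ the functions $u_k$ for $k \neq j$ vanish identically, so $\Delta(u_j - \sum_{k\neq j} u_k) \le 0$ reduces to $\Delta u_j \le 0$, and together with $\Delta u_j \ge 0$ this yields that $u_j$ is harmonic on $\omega_j$, hence on each component $V$. If such a $V$ were compactly contained in $\D$, then by the previous step $u_j \equiv 0$ on $\partial V$, and the maximum principle would force $u_j \equiv 0$ on $V$, contradicting $V \subset \{u_j > 0\}$. The only mildly subtle point of the whole argument is the identification $\partial \omega_j \cap \D \subset \{U = 0\}$ and the passage to a continuous representative of $u_j$; the rest is a direct consequence of the holomorphicity supplied by Proposition \ref{p.dez hol} together with classical maximum principle arguments.
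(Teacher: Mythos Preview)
Your argument is correct and follows essentially the same route as the paper: local Lipschitz continuity of $U$ from the holomorphicity (hence local boundedness) of $U_z^2$, transfer to each $u_j$ via the vanishing of $U$ on $\partial\omega_j\cap\D$, harmonicity of $u_j$ in $\omega_j$ from the two differential inequalities defining $\cU_N$, and the maximum principle to rule out compactly contained components. You are in fact somewhat more explicit than the paper on the intermediate steps (the inclusion $\partial\omega_j\cap\D\subset\{U=0\}$, the three-case Lipschitz estimate, and the derivation of harmonicity), but the strategy is identical.
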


\begin{proof}
The Lipschitz continuity of $U$ follows from the fact that 
$(U_z)^2=f_U$ is holomorphic and, hence, locally bounded in $\Omega$. 
As a consequence, also the functions $u_j$ are locally Lipschitz 
(because they are the zero extension of the Lipschitz functions 
$U\vert_{\omega_j}$, which in turns satisfy 
$U\vert_{\pa\omega_j\cap \Omega}=0$). Moreover, by the very definition 
of $\cU$ it follows that $u_j$ are harmonic in $\omega_j$. Therefore, there cannot exist connected components $C\subset\omega_j$ with
$\overline C\cap\partial \Omega=\emptyset$, because otherwise 
$u_j\vert_C$ would be a positive harmonic function with $u_j=0$ 
on $\pa C$.
\end{proof}

\subsection{Definition and properties of \(F_{z_0,f}\)}
Given any holomorphic function $f \in \mathcal{H}$, we use
the notation introduced in \S \ref{s.2}.
%\[
%\mathcal{Z}_{f}^{\textup{odd}} = \{z_1, z_2, \ldots \}\qquad 
%\textup{ord}(f, z_i) = 2k_i+1, \;\;k_i\in \enne.
%\]
% Clearly, $\mathcal{Z}_{f}^{\textup{odd}}$ can be empty.
% Moreover, for the sake of definitiness we adopt the convention that
% \[|z_1|\leq |z_2|\leq |z_3|\leq \cdots\]
%Let $\Omega := \D\setminus \bigcup_{j=1}^\infty L_j$ be the simply 
%connected open set defined in \eqref{e:D tilde}, with $L_j$ 
%non-intersecting closed segments with an endpoint in $z_j$ and the 
%other on $\partial \D$.
%, such that $L_j\cap \D_{|z_j|}=\emptyset$.
%If $\mathcal{Z}_{f}^{\textup{odd}}=\emptyset$, then $\Omega=\D$.

%The condition $L_j\cap \D_{|z_j|}=\emptyset$ is assumed only for convenience: e.g., it implies that $\Omega$ is an open set (also in the case of infinitely many cuts), moreover, by construction $\Omega\subset\ci$ is simply connected.
%\medskip
%Let $f^{1/2}$ be a determination of the square root of $f$ in the simply %connected domain $\Omega$ and set 
% Since $\Omega$ is simply connected, it is possible to determine a holomorphic %square root of $f$ in $\Omega$. Indeed, although the function $f$ cannot be %free of zeros in $\Omega$, by construction they have even order and therefore %a determination of the square root is possible in any neighborhood of each of %the zero of $f$ in $\Omega$.
% 
% We can then define the function
%\begin{equation}
%F_{f}(z):=2\dyle\int_{z_0}^z f^{1/2}(\zeta)\, d\zeta\quad z\in \Omega,
%\end{equation}
%with $z_0\in \Omega$ any fixed point.
The main step of the proof of Propositions \ref{p:from f to U} 
and \ref{p:unique} is contained in the following lemma.

\begin{lemma}\label{l:from f to U}
Let $f\in \mathcal{A}^1$ and \(F_{z_0,f}\) defined in \eqref{prootnew} with \(z_0\in \Omega^-\cup \cZ_f^{odd}\).
If $\Re F_{z_0,f}(z)=0$ for all $z\in \mathcal{Z}_f^{\textup{odd}}$, 
then $|\Re F_{z_0,f}|$ extends by continuity to a function $U\in \overline\cU$.
\end{lemma}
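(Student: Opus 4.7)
The strategy is to define $U:=|\Re F_{z_0,f}|$ on $\Omega$, extend it continuously to $\D$, and verify the three defining properties of $\cU$: the mutual segregation $u_ju_k=0$, $\Delta u_j\geq 0$, and $\Delta(u_j-\sum_{k\neq j}u_k)\leq 0$ for a suitable decomposition $U=\sum u_j$.

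The hardest step, and the only place where the hypothesis $\Re F_{z_0,f}(z_j)=0$ enters, is the continuous extension across each cut $L_j$. Near an odd zero $z_j$ of order $2k+1$, writing $f(\zeta)=(\zeta-z_j)^{2k+1}g(\zeta)$ with $g(z_j)\neq 0$ and integrating gives
\[
F_{z_0,f}(\zeta) - F_{z_0,f}(z_j) = c\,(\zeta-z_j)^{k+3/2}\bigl(1+o(1)\bigr).
\]
Analytic continuation once around $z_j$ multiplies this by $e^{i(2k+3)\pi}=-1$, so the two boundary values $F^\pm(w)$ on the two sides of $L_j$ satisfy $F^+(w)+F^-(w)=2F_{z_0,f}(z_j)$ for every $w\in L_j$. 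Taking real parts and invoking the hypothesis forces $\Re F^+(w)=-\Re F^-(w)$, whence $|\Re F^+(w)|=|\Re F^-(w)|$ and $U$ extends continuously across $L_j$. Continuity at $z_j$ itself is immediate from the expansion and $\Re F_{z_0,f}(z_j)=0$, which yields the local angular model $U\sim|c|\,r^{k+3/2}|\cos((k+3/2)\theta+\phi)|$; this also gives local Lipschitz regularity of $U$, and $U\in H^1(\D)$ follows from $\int_\D|\nabla U|^2=4\int_\D|f|<\infty$.

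With $U\in C^0(\D)\cap H^1(\D)$ in hand, I let $\{\omega_j\}$ be the connected components of $\{U>0\}$ and set $u_j:=U\chi_{\omega_j}$. On each $\omega_j$ the function $U$ coincides with $\pm\Re F_{z_0,f}$ on a chosen local branch (odd zeros lie on $\partial\omega_j$, not inside it, since $U(z_j)=0$), so $u_j$ is harmonic in $\omega_j$, $u_j\geq 0$, and $u_ju_k=0$ a.e. Integration by parts against a test function, using $U=0$ on $\partial\omega_j$ and $\Delta U=0$ inside, shows that $\Delta u_j$ is the non-negative boundary measure $|\partial_\nu U|\,d\mathcal{H}^1$ on $\partial\omega_j\cap\D$. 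The essential observation for the second distributional inequality is that every interior boundary point $p\in\partial\omega_j\cap\D$ is also in $\partial\omega_k$ for some $k\neq j$: at odd zeros this comes from the $(2k+3)$-sector angular model, at the remaining nodal points from the standard local structure of the nodal set of the non-constant harmonic function $\Re F_{z_0,f}$ (a smooth curve in the non-degenerate case, a multi-rayed configuration at a critical nodal point). On each common interface the densities of $\Delta u_j$ and $\Delta u_k$ coincide, both equalling $|\nabla\Re F_{z_0,f}|$, so they cancel in $\Delta u_j-\sum_{k\neq j}\Delta u_k$, leaving only the non-positive residual $-\sum_{k\neq j}|\partial_\nu U|d\mathcal{H}^1$ on $\partial\omega_k\setminus\partial\omega_j$.

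The only real obstacle is the monodromy identity $F^++F^-=2F_{z_0,f}(z_j)$ on each $L_j$; everything afterwards reduces to standard local analysis of harmonic functions combined with integration by parts.
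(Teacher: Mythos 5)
Your argument is correct and follows the same blueprint as the paper's: the monodromy relation $F^+ + F^- = 2F_{z_0,f}(z_j)$ (derived from $f^{1/2}$ flipping sign across each cut) combined with the hypothesis $\Re F_{z_0,f}(z_j)=0$ gives the continuous extension, and then one decomposes $\{U>0\}$ into connected components and checks the defining distributional inequalities. The only implementation difference is in the last step: where you read off $\Delta u_j$ as a boundary measure $|\partial_\nu U|\,d\mathcal H^1$ on $\partial\omega_j$ and observe cancellation across interfaces (using that $\nabla U$ vanishes at the higher-order nodal points, so they are $\mathcal H^1$-negligible), the paper instead verifies the inequalities locally in $\D\setminus\cZ_f^{\textup{odd}}$ via the modulus-of-a-harmonic-function structure and then removes the isolated odd zeros with a cutoff $\chi_s$ whose error terms vanish because $U$ is $O(r^{3/2})$ there — two equivalent ways of handling the same isolated singular set.
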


\begin{remark}
In the case $\mathcal{Z}_f^{\textup{odd}}=\emptyset$, the hypotheses 
are automatically satisfied and the lemma asserts that 
$|\Re F_{z_0,f}|\in \mathcal{U}$ (see also \cite[Proposition 3.9]{lm2}, where the case of solutions with singular points with even multiplicity is explicitly considered).
\end{remark}

\begin{proof}
Assume $\mathcal{Z}_f^{\textup{odd}}\neq\emptyset$. Fix an index $j$ and any point on a cut $\xi\in L_j\cap \D$,
$\xi\neq z_j$. We consider the limits of $F_{z_0,f}$ as $z$ tends
to $\xi$ from the two sides of the cut $L_j$, i.e.
\[
z\to \xi^\pm \quad \Longleftrightarrow \quad
\begin{cases}
z\to \xi,\\
\pm\left(\textup{Arg}(z-z_j) - \textup{Arg}(\xi-z_j)\right) >0 ,
\end{cases}
\]
for any local determination of the argument of $(\xi-z_j)$ (see Figure \ref{dete_argument}).
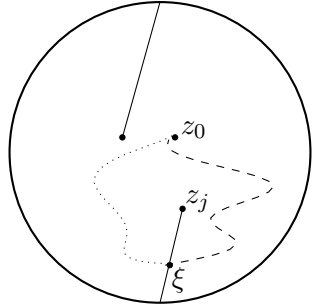
\begin{figure}[h]
\begin{tikzpicture}
\draw[thick] (0,0) circle (20mm);
\draw (-0.5,0.2) -- (0,2);
\draw  (.3,-.75)--(0,-2);
\filldraw[black] (-0.5,0.2) circle (1pt);
\filldraw[black] (0.2,0.2) circle (1pt);
 \node at (0.45,.3) {\(z_0\)};
\filldraw[black] (.3,-.75) circle (1pt);
  \node at (.5,-.65) {$z_j$};
\filldraw[black] (.13,-1.5) circle (1pt);
  \node at (.25,-1.7) {$\xi$};
\draw [domain=0:1.,dotted]
  plot({.16-.89*sin(pi*\x r)+.2*sin(4*pi*\x r)},{-.63+.85*cos(pi*\x r)});
\draw [domain=0:1,dashed]
  plot({.07+.17*\x+4*\x*(1-\x)+.4*sin(3.9*pi*\x r)},{-1.5+1.67*\x});
\end{tikzpicture}
\caption{\(z\to\xi^-\) along the dotted path, and
\(z\to\xi^+\) along the dashed path}\label{dete_argument}
\end{figure}

Namely, we consider
\begin{align}\label{e:tilde F}
F_{z_0,f} (\xi^\pm) :&= 2\lim_{z\to \xi^\pm}\int_{z_0}^z
  f^{1/2}(\zeta) \, d\zeta\notag\\
&= F_{z_0,f} (z_j) + 2\lim_{z\to \xi^\pm}\int_{z_j}^z f^{1/2}(\zeta) d\zeta.
\end{align}
Observe that $f^{1/2}$ changes sign across $L_j$, i.e.
\[
\lim_{z\to \xi^+} f^{1/2}(z) = -\lim_{z\to \xi^-} f^{1/2}(z),
\]
because $z_j$ is assumed to be a zero with odd order.
Therefore, \eqref{e:tilde F} reads as
\[
F_{z_0,f} (\xi^+) -F_{z_0,f} (z_j) =-F_{z_0,f} (\xi^-) +F_{z_0,f} (z_j).
\]
Using the hypothesis $\Re F_{z_0,f}(z_j) = 0$,
we infer that
\[
\Re F_{z_0,f} (\xi^+) = -\Re F_{z_0,f}(\xi^-),
\]
and, hence, the following limit is well-defined
\[
|\Re F_{z_0,f}(\xi)|:=\lim_{z\to \xi} |\Re F_{z_0,f}(z)|.
\]
We then conclude that there exists a continuous extension 
$V$ of $|\Re F_{z_0,f}|$ to the whole $\D$.
%Moreover, $\nabla|\Re F_{z_0,f}(\xi)|$ extends also by continuity at all 
%\(\xi\in L_j\) such that $|\Re F_{z_0,f}(\xi)|\neq 0$. Indeed 
%\[
%\nabla \Re F_{z_0,f}(\xi^+)
%=\lim_{z\to\xi^+} \nabla \Re F_{z_0,f}(z)
%=-\lim_{z\to\xi^-} \nabla \Re F_{z_0,f}(z)
%=\nabla \Re F_{z_0,f}(\xi^-),
%\]
%and since \(\Re F_{z_0,f}(\xi^+)=-\Re F_{z_0,f}(\xi^-)\) it 
%follows that
%\[
%\nabla|\Re F_{z_0,f}(\xi^+)|=\nabla|\Re F_{z_0,f}(\xi^-)|.
%\]

The extension $V$ is indipendent from the choice of the cuts $L_j$.
Indeed, consider a different simply connected domain $\widetilde \Omega$ obtained by taking away a set of non-intersecting segments
$\widetilde L_j$ with $L_j
=\widetilde L_j$ for every $j$ except one index $j_0$ (see Figure \ref{f.bo}).
\begin{figure}[h]
\begin{tikzpicture}
\draw[thick] (0,0) circle (20mm);
\draw (0.2,0.2) -- (0,2);
\filldraw[black] (0.2,0.2) circle (1pt);
%\node at (0.45,.3) {$z_0$};
\draw  (.3,-.75)--(0,-2);
\filldraw[black] (.3,-.75) circle (1pt);
 \node at (0,-.7) {$z_{j_0}$};
 \node at (-.1,-1.2) {$L_{j_0}$};
\draw  (.3,-.75)--(2,0);
  \node at (1.5,.1) {$\tilde{L}_{j_0}$};
\end{tikzpicture}
 \caption{The domains $\Omega^-$ and $\widetilde \Omega$.}
 \label{f.bo}
 \end{figure}
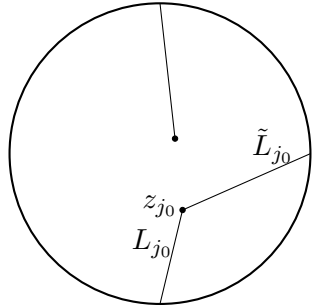
Then, we can choose the determinations
of the square root in such a way that $\widetilde f^{1/2}=f^{1/2}$
in one of the two connected components of $\Omega^-\cap \widetilde\Omega$,
and $\widetilde f^{1/2}=-f^{1/2}$ in the other.
In computing $F_{z_0,f}(z)$, we can consider paths $\gamma:[0,1]\to \D$ with $\gamma(0) = z_0$, $\gamma(1/2)= z_{j_0}$ and $\gamma(1) = z$, with $\gamma((0,1/2))$ and $\gamma((1/2,1))$ contained inside a connected
component of $\Omega^-\cap \widetilde\Omega$. Therefore, it follows from \eqref{e:tilde F} that $\Re F_{z_0,f}=\pm\Re\widetilde F_{z_0,f}$, 
thus concluding that
\[
|\Re F_{z_0,f}| = |\Re \widetilde F_{z_0,f}|,
\]
from which the uniqueness of the extension $V$ follows.

We denote by $\{\omega_j\}_j$ the connected
components of $\{V>0\}$ and set
\[
v_j:=
\begin{cases}
V(x) & \textup{if } x\in\omega_j,\\
0 & \textup{elsewhere}.
\end{cases}
\]
Note that $v_j$ is subharmonic, because $v_j$ is a continuous nonnegative function which is harmonic where it is positive.
We remark that $V$ is locally the modulus of a harmonic function in $\D\setminus \cZ_f^{\textup{odd}}$; therefore, the level set $\{V=0\}$ is made by analytic curves with endpoints on $\cZ_f\cup\partial\D$.
This implies that for every $j_0$ and for every  $z\in \partial \omega_{j_0}\cap \D \setminus \cZ_f $ there exists exactly one $i_0$ such that $z\in \partial \omega_{j_0} \cap \partial \omega_{i_0}$.
Therefore, 
\[
\tilde V := v_{j_0}-\sum_{j\neq {j_0}}v_j %\deleted{\quad \textup{is harmonic}.}
\]
equals the harmonic function $v_{j_0} - v_{i_0}=\pm \Re F_{z_0,f}$ in a neighborhood of $z$.
Moreover, $\tilde V$ is superharmonic away from $\overline{\omega_{j_0}}$, because it is the sum of superharmonic functions $-v_j$, $j\neq j_0$.
This implies that globally $\tilde V$ is superharmonic in $\D\setminus (\cZ_{f}^{}\cap \{V=0\})$.

To complete the proof, we need to verify that the Euler-Lagrange equations 
are satisfied also in a neighborhood of the isolated points $ \cZ_f^{}\cap \{V=0\}$. Consider any test function $\phi\in C_c(\D)$ with $\textup{supp} (\phi)\cap \cZ_f^{}\cap \{V=0\} = \{z_{j}\}$ and consider a smooth cut-off function $\chi_s$ such that
$\chi_s(z) = 1$ if $|z-z_j|>2s$ and $\chi_s(z) = 0$ if $|z-z_j|<s$, with 
\[
s\,\|\partial_z \chi_s\|_\infty +s^2\,\|\partial^2_{z\bar z} \chi_s\|_\infty\leq C.
\]
A simple computation yields
\begin{align*}
\int_{\D} v_{j_0} \, \partial^2_{z\bar z} \phi\, & = \lim_{s\to 0^+}\int_{\D} v_{j_0} \, \partial^2_{z\bar z} (\chi_s\,\phi)\quad \forall \; {j_0} \ \textup{with } z_{j_0}\in \cZ_f^{}\cap \{V=0\} ,
\end{align*}
taking into account that $v_{j_0}(z_j)= 0$.
% $$V(z_j)=0=\lim_{r\to 0}\|\nabla V\|_{L^\infty(B_r(z_j)},$$ we infer that for any test function $\phi\in C_c(\D_r)$ we have for every $i$
% \begin{align*}
% \int_{\D_r} v_i(x) \, \Delta \phi(x)\, dx & = \sum_{j=0}^M\lim_{s\to 0^+}\int_{\D_r\setminus B_s(z_j)} v_i(x) \, \Delta \phi(x)\, dx\\
% &= \sum_{j=0}^M\lim_{s\to 0^+} \int_{\D_r\setminus B_s(z_j)} \Delta v_i(x) \,  \phi(x)\, dx +\\
% &\quad + \sum_{j=0}^M\lim_{s\to 0^+} \int_{\partial B_s(z_j)} v_i(x) \,  \nabla\phi(x)\cdot \frac{x}{|x|}\, d\mathcal{H}^{1}(x)\\
% &\quad -\sum_{j=0}^M\lim_{s\to 0^+} \int_{\partial B_s(z_j)} \nabla v_i(x)\cdot \frac{x}{|x|} \,  \phi(x)\, d\mathcal{H}^{1}(x)\\
% &= \sum_{j=0}^M\lim_{s\to 0^+} \int_{\D_r\setminus B_s(z_j)} \Delta v_i(x) \,  \phi(x)\, dx,
% \end{align*}
This implies that $v_{j_0}$ if subharmonic in the whole of $\D$, as well as $v_{j_0}-\sum_{j\neq {j_0}} v_j$ superharmonic in $\D$.

Finally, we verify that $V$ belongs to $\overline\cU$.
To this aim, we notice that for every $r<1$ there exists an integer $N_r$ such that $V\vert_{\D_r} \in \cU_{N_r}(\D_r)$ with $\|V\|_{H^1(\D_r)}\leq \|f\|_{L^1(\D)}$. By a simple rescaling we deduce that $V(r\cdot)\in \cU$ and $V(r\cdot)\to V$ in $H^1(\D)$, thus $V\in \overline\cU$.

\end{proof}

\subsection{Commutativity of the diagram \eqref{diag}}
Next we show that for every $U\in \mathcal{U}$ the function $f_U := I(U)$ satisfies \eqref{e.parti reali}  of Proposition \ref{p:from f to U}.

\medskip

We need the following lemma.

%will need the following two elementary lemmas in real analysis, whose proofs are postponed to the appendix.

\begin{lemma}\label{l.parte reale} 
Let \(h(z)\) be holomorphic, $u = \Re h$ e $v= \Im h$.
Then, $4\big(\partial_z|u|\big)^2=(h')^2$ for a.e. $z$.
% \begin{equation}\label{form}
% 4\big(\partial_z|u|\big)^2=(h')^2 \quad \hbox{for a.e. } z.
% \end{equation}
\end{lemma}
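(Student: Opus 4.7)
The plan is to reduce the identity to the open set $\{u\neq 0\}$, where $|u|$ is smooth, and then to verify it by a direct Cauchy--Riemann computation. The measure-zero set $\{u=0\}$ is negligible because $u$ is real-analytic.

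More precisely, first I would dispense with the trivial case $h\equiv\text{const.}$, for which both sides vanish. Assuming $h$ is nonconstant, $u=\Re h$ is a nonconstant real-analytic function on the (connected components of the) domain of $h$, so the nodal set $\{u=0\}$ is a real-analytic variety of dimension at most one and hence has planar Lebesgue measure zero. Since $|u|$ is Lipschitz, it suffices by Rademacher's theorem to establish the identity classically at every point of the open set $\{u\neq 0\}$.

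On $\{u\neq 0\}$ the sign of $u$ is locally constant, so $|u|=\sigma u$ with $\sigma\in\{-1,+1\}$ locally constant. Therefore $\partial_z|u|=\sigma\,\partial_z u$ and in particular
\[
(\partial_z|u|)^2=(\partial_z u)^2.
\]
Now I would compute $\partial_z u$ directly. By the Cauchy--Riemann equations $v_x=-u_y$ and $v_y=u_x$, the complex derivative of $h$ is
\[
h'(z)=u_x+iv_x=u_x-iu_y,
\]
whereas the Wirtinger derivative is
\[
\partial_z u=\tfrac{1}{2}(u_x-iu_y)=\tfrac{1}{2}h'(z).
\]
Squaring gives $4(\partial_z u)^2=(h')^2$, and combined with the previous display we obtain $4(\partial_z|u|)^2=(h')^2$ on $\{u\neq 0\}$, hence almost everywhere.

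There is no real obstacle here; the only point worth stating carefully is why the nodal set $\{u=0\}$ can be discarded, and for this real-analyticity of the nonconstant harmonic function $u$ is enough. This identity is exactly what one needs in order to link the Hopf-type quantity $(\partial_z|u|)^2$ (which plays the role of $f_U$ for the harmonic model case) to the holomorphic object $(h')^2$, and it will presumably be applied branch-by-branch on the nodal regions $\omega_j$ in the sequel.
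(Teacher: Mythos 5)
Your proof is correct and follows essentially the same route as the paper: discard the measure-zero nodal set of the harmonic function $u$, note $|u|=\sigma u$ locally on $\{u\neq 0\}$ so $(\partial_z|u|)^2=(\partial_z u)^2$, and then use Cauchy--Riemann to identify $2\partial_z u$ with $h'$. The paper's proof is just a more compressed version of this (writing $|u|_z=\frac{u}{|u|}u_z$ directly rather than introducing the local sign $\sigma$).
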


\begin{proof}
We have,
\[
h'(z)= h_{x_1}=u_{x_1} + i v_{x_1} = u_{x_1}-iu_{x_2}= 2u_z,
%\frac{\partial h}{\partial x_1}=\frac{\partial u}{\partial x_1}  + i \frac{\partial v}{\partial x_1}
%=\frac{\partial u}{\partial x_1} - i \frac{\partial u}{\partial x_2}
%=2\frac{\partial u}{\partial z},
\]
where we used the Cauchy-Riemann conditions
\( u_{x_1}=v_{x_2} \), \(u_{x_2}=-v_{x_1}\).
Next, for every $z$ such that $u(z)\neq 0$ (recall that $\{u =0\}$
is a negligible set because of the harmonicity of $u$), we can compute as follows
% \[
% \frac{\partial |u|}{\partial x_1}=\frac{\partial \sqrt{u^2}}{\partial x_1}
% =\frac{u}{|u|} \frac{\partial u}{\partial x_1},
% \qquad
% \frac{\partial |u|}{\partial x_2}=\frac{\partial \sqrt{u^2}}{\partial x_2}
% =\frac{u}{|u|} \frac{\partial u}{\partial x_2}.
% \]
% We then infer that in $\{u\neq 0\}$
\[
\begin{aligned}
4 |u|_z^2
&
%\Big(\Big(\frac{\partial }{\partial x_1}  -i\frac{\partial }{\partial x_2}\Big) |u|\Big)^2
  = \Big[ \frac{u}{|u|}\Big(\frac{\partial }{\partial x_1}
  -i\frac{\partial }{\partial x_2}\Big) u \Big]^2
%  \\
%& =\Big[\Big(\frac{\partial }{\partial x_1}
%  -i\frac{\partial }{\partial x_2}\Big) u\Big]^2
  =4u_z^2=(h'(z))^2.
\end{aligned}
\]
\end{proof}

We can then show the following.

\begin{proposition}\label{p:from U to U}
Let $U\in\mathcal{U}$, $f_U:=I(U)$ and $F_{z_0,f_U}$ be as in \eqref{prootnew} with $z_0\in \{U=0\}$ any fixed point. Then, $\Re F_{z_0,f_U} (z) = 0$ for all $z\in \mathcal{Z}_{f_U}^{\textup{odd}}$ and
$U=|\Re F_{z_0,f_U}|$.
\end{proposition}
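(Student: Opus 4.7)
The plan is to identify $\Re F_{z_0,f_U}$ with $\pm U$ on each connected component of $\{U>0\}\cap\Omega$ and then to use the normalization $F_{z_0,f_U}(z_0)=0$ together with the continuity of $\Re F_{z_0,f_U}$ across free-boundary arcs to force every additive constant to zero. This will simultaneously yield $U=|\Re F_{z_0,f_U}|$ and the vanishing condition at the points of $\cZ_{f_U}^{\textup{odd}}$.

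First I would fix a connected component $C$ of $\om_j\cap\Omega$ for some $j$. Since $U=u_j$ is harmonic on $\om_j$, the Wirtinger derivative $U_z$ is holomorphic there and $f_U=U_z^2$ on $\om_j$. Because $\Omega$ is simply connected and a single branch of $f_U^{1/2}$ has been fixed on $\Omega$, there exists $\epsilon_C\in\{\pm 1\}$ with $f_U^{1/2}=\epsilon_C\,U_z$ on $C$. Using $(\Re F_{z_0,f_U})_z = F'_{z_0,f_U}/2 = f_U^{1/2}$ (the real part of a holomorphic $F$ satisfies $(\Re F)_z=F'/2$), we obtain
\[
\bigl(\Re F_{z_0,f_U}-\epsilon_C\,U\bigr)_z=0 \quad\textup{on }C.
\]
Since the function on the left is real-valued, $\partial_z\varphi=0$ forces $\varphi_{x_1}=\varphi_{x_2}=0$, so it is constant on the connected set $C$: $\Re F_{z_0,f_U}=\epsilon_C\,U+c_C$ with $c_C\in\R$.

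Next I would show $c_C=0$ for every such $C$. Since $z_0\in\{U=0\}$ and $U\not\equiv 0$, $z_0$ is a limit point of some component $C_0\subset\om_j\cap\Omega$. The continuous extension of $F_{z_0,f_U}$ to $\Omega\cup\cZ_{f_U}^{\textup{odd}}$ together with $F_{z_0,f_U}(z_0)=0$ gives, upon letting $z\to z_0$ inside $C_0$, $0=\epsilon_{C_0}U(z_0)+c_{C_0}=c_{C_0}$. To propagate to an arbitrary component $C'$, I would exploit path-connectedness of $\Omega$ to select a path from $C_0$ to $C'$ in $\Omega$ crossing the one-dimensional set $\{U=0\}\cap\Omega$ transversally at finitely many analytic arcs (perturbing slightly to avoid the isolated even-order zeros of $f_U$ if necessary). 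Along each such crossing arc $U$ vanishes identically, and $\Re F_{z_0,f_U}$ is continuous on $\Omega$, so the constants from the two sides must agree; iterating yields $c_{C'}=c_{C_0}=0$.

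Combining the two steps gives $\Re F_{z_0,f_U}=\epsilon_C\,U$ on each $C$, hence $|\Re F_{z_0,f_U}|=U$ on the dense open set $\{U>0\}\cap\Omega$; by continuity the identity extends to all of $\Omega\cup\cZ_{f_U}^{\textup{odd}}$. For any $z_j\in\cZ_{f_U}^{\textup{odd}}$, the odd-order vanishing of $f_U$ at $z_j$ rules out $z_j$ belonging to any $\om_j$ (at any point where $U$ is locally the modulus of a harmonic function, $f_U$ is locally a square of a holomorphic function and therefore has a zero of even order); consequently $z_j\in\{U=0\}$ and $0=U(z_j)=|\Re F_{z_0,f_U}(z_j)|$, which gives both assertions. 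The step I expect to be the most delicate is the propagation of the vanishing constant across components: one has to verify that even when $\om_j$ has countably many connected components and the set of cuts is infinite, each component of $\om_j\cap\Omega$ can be joined to $C_0$ by a path in $\Omega$ whose free-boundary crossings are finite and transverse, so that the continuity-based equality of adjacent constants genuinely carries through.
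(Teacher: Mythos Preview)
Your proof is correct and follows essentially the same strategy as the paper: both arguments show that $\Re F_{z_0,f_U}=\pm U+c$ on each connected component of $\{U>0\}\cap\Omega$, pin down $c=0$ at the component touching $z_0$, and then propagate this to all of $\Omega$. The paper phrases the propagation slightly differently---working with $V:=|\Re F_{z_0,f_U}|$ and components of $\{U>0\}\cap\{V>0\}$, then invoking a unique-continuation/connectedness argument---whereas you make the crossing-of-nodal-arcs mechanism explicit; your flagged concern about finitely many crossings along a compact path is handled by the local finiteness of the analytic-arc structure of $\cN_U$ established in Proposition~\ref{p.reg2}.
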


\begin{proof}
Let $\Omega^-\subseteq \D$ be the domain in \eqref{e:D tilde} for the holomorphic function $f_U$.
%We recall moreover the definition of $F_{f_U}$ in \eqref{prootnew}
%\[
%F_{f_U}(z):= 2 \int_{z_0}^z f_U^{1/2}(\zeta)\,d\zeta,
%\]
%with $z_0$ any fixed point with $U(z_0)=0$.
We set $V(z):= |\Re F_{z_0,f_U}(z)|$. Then, by Lemma \ref{l.parte reale} we have that
\begin{equation}\label{e.derivate pm}
V_z^2 = f_U = U_z^2\quad \textup{and} \quad V(z_0)=U(z_0)=0.
\end{equation}
We claim that \eqref{e.derivate pm} implies that $U=V$.
% To this aim, notice that the closed sets $\{U=0\}$ and $\{V=0\}$ have empty interiors since the holomorphic function $f_U=U_z^2=V_z^2$ is not identically zero.
% Consider the dense open set $A:=\{U>0\}\cap \{V>0\}$.
Condition \eqref{e.derivate pm} implies that $U_z = \pm V_z$ a.e. By the regularity of $U$, the sign is constant in any connected component of $\{U>0\}\cap \{V>0\}$.
If $U_z = - V_z$, then $U+V=C>0$ with $C\in \erre$. If $U_z = V_z$, then $U-V=C$ with $C\in \erre$.
In particular, for any connected component $A\subseteq\{U>0\}\cap \{V>0\}$ it holds that
\[
\partial A \subseteq \{U=0, \, V=C\} \cup \{U=C, \; V=0\}.
\]
%By the continuity of $U$ and $V$, the sets $\{U=0, \, V=C\}$ and $\{U=C, \; V=0\}$ have a positive distance, and therefore each connected component of $\partial A'$ is a connected component of $\{U=0\}$ and $\{V=C\}$, or a component of $\{U=C\}$ and $\{V=0\}$.
Since $\{V=C\}$ are locally the level sets of a harmonic function, there cannot be accumulation of the level sets $\{V=C\}\cap \{U=0\}$ towards $z_0$. This implies that $z_0$ is on the boundary of some connected components of $\{U>0\}\cap \{V>0\}$, and since $U(z_0)= V(z_0)=0$ actually $U=V$ in a neighborhood of $z_0$. By unique continuation, it then follows that $U=V$ is an open subset and, by connectedness, in the whole of $\Omega^-$. Therefore, we conclude that $V$ can be extended by continuity in $\D$ as $U=V$.
To conclude the proof we need only to notice that
\[
\mathcal{Z}_{f_U}^{\textup{odd}} \subset \{U=0\},
\]
because in a neighborhood of any point of $\mathcal{Z}_{f_U}\cap \{U>0\}$ the function $U$ is harmonic and $U_z^2$ in such neighborhood can only have zeros of even order. This implies that $\Re F_{z_0,f_U}(z) = 0$ for all $z\in \mathcal{Z}_{f_U}^{\textup{odd}}$.
\end{proof}

\subsection{Proof of Proposition \ref{p:from f to U}}\label{proof23}
We start showing that i. implies ii. in Proposition \ref{p:from f to U}, i.e., we show that if $f=I(U)$ for some $U\in \mathcal U$, then there exists $z_0$ such that
\[
\Re F_{z_0,f}(z)=0\quad \forall\;z\in \mathcal{Z}_f^{\textup{odd}} \quad \textup{and}\quad |\Re F_{z_0,f}|\in\mathcal{U}.
\]
In fact, this is a direct consequence of Proposition
\ref{p:from U to U}, if $z_0$ is any point such that $U(z_0)=0$.

Viceversa, ii. implies i.: indeed, by Lemma
\ref{l:from f to U}, if
\[
\Re F_{z_0,f}(z)=0\quad \forall\;z\in \mathcal{Z}_f^{\textup{odd}},
\]
then $|\Re F_{z_0,f}|\in\cU$ and by Lemma \ref{l.parte reale} $I(|\Re F_{z_0,f}|)=f$.
\hfill\qed

\subsection{Proof of Proposition \ref{p:unique}}\label{proof25}
i. Assume that $f=I(U)$ with $U\in \cU$. Then, $\emptyset\neq\mathcal{Z}_{f}^{\textup{odd}}\subset \{U=0\}$ implies that there exists $z_0\in \mathcal{Z}_f^{\textup{odd}}$ such that $U=|\Re F_{z_0,f}|$ by Proposition \ref{p:from U to U}. In particular, $U$ depends uniquely on $f$ and it is the only function with $I(U)=f$.

For ii. we notice that by Lemma \ref{l.parte reale} we have that any function of the form $V=|\Re F_{z_0,f}|$ with $z_0\in \D$ is defined in the whole $\D$, belongs to $\mathcal{U}$ and satisfies $I(V) = f$.
Viceversa, any $V$ such that $I(V)= f$ can be
recovered by Proposition \ref{p:from U to U} as $V= |\Re F_{z_0,f}|$, where $z_0$ is any point in $\{V=0\}$, thus showing that
\[
\big\{|\Re F_{z_0,f}| : z_0\in \D\big\} = I^{-1}(f).
\]

\hfill\qed

% \subsection{Proof of Corollary \ref{c.stability}}
% Let $U_\eps, U\in \mathcal{U}$ such that $U_\eps\to U$ in $H^1(\D)$. Then, by definition $I(U_\eps) \to I(U) $ in $L^1$ and, considering that $I(U_\eps)$ are holomorphic, we also infer that $I(U_\eps)\to I(U)$ locally uniformely in $\D$ and therefore $U_\eps \to U$ locally uniformly.
% 
% Viceversa, from Proposition \ref{p:from U to U} we have that
% $U_\eps = |\Re F_{I(U_\eps)}|$... completare

\section{Nodal set of the functions in $\cU$}\label{s:nodalset}

The main result of this section (Proposition \ref{pr:formula}) is the 
proof of a Euler-type formula relating the number of vertices, edges 
and faces of a planar graph (see \cite[Theorem 1.5.2]{j}), applied to the 
study of the nodal set of \(U\in\cU\).  
In order to do so, we need to show that the nodal
set of \(U\) can be viewed as a graph with some very special properties. 

Therefore we need to introduce the definition of nodal and critical 
set of \(U\in \cU\) and to relate to the order of the zeros of \(f_U\). 
This is done in Proposition \ref{p.reg2}. 

We start by reviewing results which are already known 
in the literature \cite{ctv0,ctv3}.

\subsection{Structure of $U\in \cU$}
We give direct proofs of several regularity properties of the 
functions $U\in \cU$.
Recall the definition of $m_U(z)$ in \eqref{e.multiplicity m}.
If $m_U(z_0)=1$, then in a neighborhood of $z_0$ there exists only one
function, say $u_{j_0}$, not identically zero. By Harnack inequality,
$u_{j_0}>0$ in that neighborhood and $z_0\in \omega_{j_0}\subset\{U>0\}$. 
Therefore, the set of positivity of $U$ is equivalently characterized 
as the set of multiplicity one points:
\[
\{U>0\} = \big\{z\in\D : m_U(z)=1\big\}.
\]
Moreover, there are no points with $m_U(z_0)=0$, because otherwise
$U\equiv 0$ in a neighborhood of $z_0$ with the consequence that 
both $\nabla U$ and $f_U$ are identically zero.
The $2$-points characterize the interfaces
between the regions $\omega_j$. We denote $\cN_U,\mathcal{C}_U$ 
the nodal and critical set of $U$, respectively:
\[
\cN_U:=\{z\in\D: U(z)=0\}\quad \textup{and}\quad 
\mathcal{C}_U:=\{z\in \D : U(z) = f_U(z) = 0\}.
\]
The next result links \(m_U(z_0)\) with the order of the zero \(z_0\) 
of \(f_U\).

\begin{proposition}\label{p.reg2}
Let \(U\in\cU\). Then,
\begin{equation}\label{formula-x}
m_U(z_0)=2+{\rm ord}(\fU;z_0)\qquad \forall\; z_0\in \cN_{U},
\end{equation}
and the following holds:\\
i. $\mathcal{C}_U=\{z\in\D: m_U(z)\geq 3\}$.\\
%has no accumulation point in $\D$.
ii. $\cN_U$ is the union of non-intersecting analytic
arcs with endpoints on $\mathcal{C}_{U}\cup \partial \D$, 
which meets in $\mathcal{C}_U$ at equal angles.\\
% \[
% \Gamma_{\ell j}:=\pa \om_\ell\cap \pa \om_j \cap \{z\in \D: m(U;z)=2\}
% \]
% is either empty or a connected arc with endpoints on $\mathcal{C}_U\cup\partial\D$;
iii. If $U=u_1\cdots u_N$, then
$u_\ell-u_j$ is harmonic in $\textup{Int} (\overline \omega_\ell \cup \overline \omega_j)$.\\
%$\D\setminus \cup_{h\neq \ell,j}\overline \om_h$
iv. For every \(z_0\in\D\) such that \(m_U(z_0)=2\) there exist \(\ell\neq j\)
such that \(z_0\in \overline \omega_\ell \cap \overline \omega_j\) and
\begin{equation}\label{grad}
\lim_{\stackrel{z\to z_0}{z\in \omega_\ell}} \nabla u_\ell(z)
=-\lim_{\stackrel{z\to z_0}{z\in \omega_j}} \nabla u_j(z) \neq 0.
\end{equation}
v. If $z_0\in \D$ with $m_U(z_0)=h\geq 3$, then there exists
\(\th_0\in (-\pi,\pi]\) such that
\begin{equation}\label{local}
U(z_0+r e^{i\th})=r^{h/2} \left| \cos\big( \tfrac{h}{2} (\th +\th_0)\big)
\right|+o(r^{h/2})\quad \textup{as $r\to 0$.}
\end{equation}
In particular, $\|\nabla U\|_{L^\infty(\D_r(z_0))} \to 0$ as $r\to 0$.
\end{proposition}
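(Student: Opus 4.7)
The backbone of the proof is the local representation of $U$ via its Hopf differential. Fix $z_0 \in \cN_U$ and work in a simply connected neighborhood $V$ of $z_0$ containing no zero of $f_U$ other than possibly $z_0$ itself. Using the cut construction \eqref{e:D tilde} localized to $V$ and Proposition \ref{p:from U to U}, we obtain a holomorphic primitive $F$ of $2f_U^{1/2}$ on $V$ (branched at $z_0$ if $z_0 \in \cZ_{f_U}^{\mathrm{odd}}$) such that $U = |\Re F|$ on $V$; since $U(z_0) = 0$ forces $\Re F(z_0) = 0$, after subtracting $i\, \mathrm{Im}\, F(z_0)$ we may assume $F(z_0) = 0$.

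Set $k := \mathrm{ord}(f_U;z_0)$ and $h := k+2$. Writing $f_U(\zeta) = a(\zeta-z_0)^k (1 + O(\zeta - z_0))$ with $a \neq 0$ and integrating $2 f_U^{1/2}$ from $z_0$ to $z$, one gets
\begin{equation*}
F(z) = \frac{4\sqrt{a}}{h}(z-z_0)^{h/2}(1 + o(1)) \qquad \text{as } z \to z_0,
\end{equation*}
so in polar coordinates $z - z_0 = r e^{i\theta}$ there exist $c > 0$ and $\theta_0 \in (-\pi,\pi]$ with
\begin{equation*}
U(z_0 + r e^{i\theta}) = c\, r^{h/2} \bigl|\cos\bigl(\tfrac{h}{2}(\theta + \theta_0)\bigr)\bigr| + o(r^{h/2}).
\end{equation*}
The absolute value makes the right-hand side well-defined on the whole punctured disk even when $h$ is odd, since crossing a cut reverses the sign of both $f_U^{1/2}$ and of $\Re F$. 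This proves item v (the last sentence follows from $|\nabla U| = 2|f_U|^{1/2} \to 0$ when $k \geq 1$); moreover, since $\theta \mapsto \cos(\tfrac{h}{2}(\theta + \theta_0))$ has exactly $h$ zeros on $[0, 2\pi)$ spaced by $2\pi/h$ and alternates in sign between them, $\D_r(z_0) \setminus \{z_0\}$ is partitioned by $\cN_U$ into $h$ equal angular sectors where $U > 0$, giving $m_U(z_0) = h = 2 + \mathrm{ord}(f_U;z_0)$, i.e.\ \eqref{formula-x}, and item i as an immediate corollary. For item ii, outside $\mathcal{C}_U$ we have $F'(z_0) \neq 0$ and the implicit function theorem yields that $\cN_U = \{\Re F = 0\}$ is locally a real-analytic arc, while at $z_0 \in \mathcal{C}_U$ the same expansion identifies $h$ analytic branches of $\cN_U$ meeting at $z_0$ at equal angles $2\pi/h$.

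For items iii and iv, consider a $2$-point $z_0 \in \overline\omega_\ell \cap \overline\omega_j$: the local picture gives $U = |\Re F|$ with $F'(z_0) \neq 0$, and up to relabeling $\omega_\ell = \{\Re F > 0\}$, $\omega_j = \{\Re F < 0\}$ near $z_0$, so $u_\ell - u_j = \pm \Re F$ is harmonic there. Therefore $u_\ell - u_j$ is harmonic on $\Omega_{\ell j} := \mathrm{Int}(\overline \omega_\ell \cup \overline \omega_j)$ minus the discrete set $\mathcal{C}_U \cap \Omega_{\ell j}$, and extends harmonically across those isolated points by removable-singularity (it is bounded), proving iii. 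Item iv is then immediate: locally $u_\ell = (\Re F)_+$, $u_j = (\Re F)_-$, so $\lim_{\omega_\ell \ni z \to z_0} \nabla u_\ell = \nabla \Re F(z_0)$ and $\lim_{\omega_j \ni z \to z_0} \nabla u_j = -\nabla \Re F(z_0)$, both nonvanishing since $|\nabla \Re F(z_0)| = |F'(z_0)| = 2|f_U(z_0)|^{1/2} > 0$ at a $2$-point.

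The only genuinely delicate step is the Puiseux expansion at odd-order zeros of $f_U$, where $f_U^{1/2}$ is branched and $F$ is initially defined only on a slit disk: one must verify that the profile $r^{h/2}|\cos(\tfrac{h}{2}(\theta + \theta_0))|$ extends single-valuedly to the full punctured disk and that the count of positivity sectors yields $m_U(z_0) = h$. This compatibility ultimately rests on the sign-flip of $f_U^{1/2}$ across a cut, which is precisely the mechanism making $|\Re F|$ continuously extendable in Lemma \ref{l:from f to U}.
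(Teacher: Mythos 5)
Your proposal is correct and follows essentially the same route as the paper: represent $U$ locally as $|\Re F|$ via Proposition \ref{p:from U to U}, extract the Puiseux expansion of $F=2\int f_U^{1/2}$ around a zero of $f_U$ of order $n$ to get $F(z)\sim c_0(z-z_0)^{(n+2)/2}$, and read off $m_U(z_0)=n+2$ together with items i--v. Two minor remarks: the removable-singularity step in your proof of iii is vacuous, since no point of $\cC_U$ can lie in $\mathrm{Int}(\overline\omega_\ell\cup\overline\omega_j)$ (a $3$-point would force a third nodal region to accumulate there); and your expansion correctly carries a multiplicative constant $c>0$ that the paper's statement \eqref{local} silently suppresses but its proof retains as $|c_0|$.
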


% Finally, the points with higher multiplicity are isolated and the structure of the nodal set around them is prescribed by the multiplicity.
%
% \begin{proposition}
% \label{p.reg3}
% Let \(U\in\cU_N\) for some $N\in \enne^+$, $U=u_1+\cdot+u_N$. Then,
% \begin{itemize}
% \item[i.]
% \begin{equation}\label{formula}
% m(U;z_0)=2+{\rm ord}(\fU;z_0)\qquad \forall\; z_0\in \cN_U.
% \end{equation}
% \item[ii.] The set $\mathcal{C}_U:=\{z: m(U;z)\geq 3\}\subset \mathcal{Z}_{f_U}$ has no accumulation point in $\D$.
% \item[iii.] $\cN_U$ is the union of non-intersecting analytic arcs with
% endpoints on $\mathcal{C}_{U}\cup \partial \D$, which meets at the
% points $z_j$ at equal angles.
%
% \item[iv.] If $z_0\in \D$ with $m(U;z_0)=h\geq 3$, then there exists
% \(\th\in (-\pi,\pi]\) such that
% \begin{equation}\label{local}
% U(r,\th)=r^{h/2} \left| \cos\left( \frac{h}{2} (\th +\th_0)\right)
% \right|+o(r^{h/2})
% \end{equation}
% as $r\to 0$, where $(r,\th)$ is a system of polar coordinates around
% \(z_0\).
% In particular,
% \[
% \|\nabla U\|_{L^\infty(B_r(z_0))} \to 0 \quad \textup{as }r\to 0^+.
% \]
% \end{itemize}
% \end{proposition}

\begin{proof}
Let $z_0\in\cN_U$. If $f_U(z_0)\neq 0$, then we know that 
$U=|\Re F_{z_0, f_U}|$ is the absolute value of a harmonic function 
in a neighborhood of $z_0$ and therefore $\{U=0\}$ is locally an 
analytic arc separating two connected components of $\{U>0\}$. 
In this case, \eqref{formula-x} is verified because
\[
m_U(z_0) = 2 \qquad \hbox{and}\qquad {\rm ord}(f_U;z_0)=0.
\]
Viceversa let \(z_0\in \cN_U\cap {\cZ}_\fU\) be a zero of $f_U$ of order
\(n\geq 1\). Without loss of
generality, assume that \(z_0=0\) and in a neighborhood we can write
\[
\fU(z)=z^{n} (a_n+a_{n+1}z+...),\quad a_n\neq 0.
\]
If $n$ is odd, in a sufficiently small neighborhood of $z_0=0$ we can
select a single valued branch of the square root of the term inside
the parentheses, say
\[
(a_n+a_{n+1}z+...)^{1/2}=b_0+b_1 z+b_2 z^2+\cdots
\]
 We then get for \(f_U^{1/2}\), locally outside \(z=0\),
\[
{f_U^{1/2}(z)}=z^{n/2}(b_0+b_1 z+b_2 z^2+\cdots).
\]
If $n$ is a zero of even order, putting $n=2s$ we get
\[
{f_U^{1/2}(z)}=z^{s}(b_0+b_1 z+b_2 z^2+\ldots).
\]
In both cases, integrating term by term leads to
\[
F_{0,f_U}(z)=2\int_{0}^z {f_U^{1/2}(\zeta)}d\zeta
=z^{(n+2)/2}(c_0+c_1 z+c_2 z^2+\ldots),
\]
for suitable coefficients $c_i$ with $c_0\neq 0$.
We use Proposition \ref{p:from U to U} to infer that
\begin{equation}\label{Uang}
U(z)=|\Re F_{0,f_U}(z)|
=|c_0|\,|z|^{(n+2)/2}|\cos(\arg z^{ \frac{n+2}{2}})|+o(|z|^{(n+2)/2}).
\end{equation}
The nodal set of \(U\) around \(0\) consists of \(n+2\) analytic 
curves having a common endpoint in the origin with the angle between 
two adjacent rays is equal to \(2\pi/(n+2)\). Hence we infer that the 
origin is a point of multiplicity \(n+2\) for \(U\) and order \(n\) 
for \(f_U\):
\[
m_U(0) = n+2 \quad \textup{and}\quad {\rm ord}(f_U;0)=n.
\]
In particular, we get all the conclusions of the proposition. 
Indeed, i. is a direct consequence of \eqref{formula-x}, because 
in any zero of $f_U$ the order is greater or equal $1$ and, 
therefore, the multiplicity is bigger or equal $3$. The structure 
of $\cN_U$ in ii. and v. are a consequence of \eqref{Uang}. 
Finally, the properties iii. and iv. follows from the fact that 
$U=|\Re F_{z_0, f_U}|$ is locally the modulus of a harmonic 
function away from the critical set, so that changing sign 
across the regular part of $\cN_U$ gives back a harmonic function for which ii. and iii. trivially hold.
\end{proof}

\subsection{Index formula}
We are now ready to state and prove the main result of 
the present section.

We consider functions $U\in \mathcal{U}$ such that $U$ extends 
continuously on $\partial \D$ (with an abuse of notation denoted by $U$ itself) and the number of connected components of
$\{z\in \partial \D : U(z)>0\}$ is an integer $M\in \enne$:
\begin{align}\label{e.sol reg}
M:= \#  \big\{\textup{connected components of } \{z\in \partial \D : U(z)>0\}\big\}.
\end{align}
Set, moreover,
\begin{equation}\label{e.T}
T:=\#  \big\{\textup{connected components of } \overline{\cN_U}\big\},
\end{equation}
with $\overline{\cN_U}$ denoting the closure of the nodal set:
\[\overline{\cN_U}=\{z\in \overline\D : U(z)=0\}.\]

Recalling Remark \ref{r.indeterminato},
from now on we assume that any function $U$ satisfying \eqref{e.sol reg}
is written as $U=(u_1,\ldots,u_N)\in \mathcal{U}_N$ with the condition
\begin{equation}\label{e.sol reg2}
\omega_j =\{u_j>0\} \quad \textup{is connected for }
j\in \{1, \ldots, N\}.
\end{equation}
Set, moreover, for all $\ell, j\in\{1, \ldots, N\}$ with $\ell\neq j$,
\[
\Gamma_{\ell j}:=\pa \om_\ell\cap \pa \om_j \cap \big\{z\in \D: m_U(z)=2\big\},
\]
and let $\{\gamma_j\}_{j=1, \ldots, M}$ be the connected components of $\{U>0\}\cap \partial\D$.
% \[
% \{\gamma_j\}_{j=1, \ldots, M} \quad
% \textup{be the connected components of}\quad \{U>0\}\cap \partial\D.
% \]
% Finally, we denote $T\in \enne$ the number of connected
% components of \[\overline\cN_U=\{z\in \overline\D : U(z)=0\}.\]

% \begin{remark}\label{r:infinite compomenti}
% The nodal set \(\cN_U\) has no closed loop, that is \(\cN_U\) is an
% acyclic graph, either a tree or a forest  (i.e., a disjoint union of trees). $T$ is the number of trees of $\overline\cN_U$.
% \end{remark}

There is a simple connection between the integers $N, M$ and $T$.

\begin{lemma}
Let \(N\geq 2\) and $U\in \cU_N$ satisfying 
\eqref{e.sol reg}-\eqref{e.sol reg2}.  Then,
\begin{equation}\label{e.MNT}
M= N+T-1.
\end{equation}
\end{lemma}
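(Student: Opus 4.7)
The plan is to compute the Euler characteristic of the quotient space $X := \overline{\D}/\overline{\cN_U}$, obtained by collapsing each connected component of the closed nodal set to a single point, in two different ways.

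Two preliminary facts are needed. First, each region $\omega_j$ is a topological disk: it is open, connected and bounded, and any bounded component of $\R^2\setminus\omega_j$ would contain some $\omega_k$ compactly in $\D$, contradicting Corollary \ref{c.reg1}, so $\omega_j$ is simply connected. The main input is that each connected component of $\overline{\cN_U}$ is a tree, hence contractible. To prove this, suppose for contradiction that some component contains a simple cycle $C$, regarded as a Jordan curve in $S^2$. The bounded component $R$ of $S^2\setminus C$ must lie inside $\D$, because any point of $R\cap\pa\D$ would belong to $\pa\D\setminus C$ and would connect $R$ to the unbounded component through a neighborhood outside $\overline{\D}$. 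For each $j$, the function $u_j$ is harmonic and nonnegative on the open set $\omega_j\cap R$ and vanishes on $\pa(\omega_j\cap R)\subset \overline{\cN_U}$, so the maximum principle forces $u_j\equiv 0$ on $\omega_j\cap R$. Consequently $\omega_j\cap R=\emptyset$ for every $j$, and the nonempty open set $R$ is contained in $\cN_U$; but by Proposition \ref{p.reg2}.ii, $\cN_U$ is a locally finite union of analytic arcs and hence has empty interior, a contradiction.

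Given these two facts, $\chi(X)$ can be computed two ways. Since each of the $T$ components of $\overline{\cN_U}$ is contractible, collapsing them produces a space homotopy equivalent to $\overline{\D}$, so $\chi(X)=1$. On the other hand, $X$ inherits a natural CW decomposition from $\overline{\D}$: the $T$ collapsed components are the $0$-cells; the $M$ positive arcs $\ga_1,\ldots,\ga_M$, being the connected components of $\pa\D\setminus\overline{\cN_U}$, are open $1$-cells whose two endpoints each map to collapsed points; and the $N$ regions $\omega_j$, which are open $2$-disks by the first preliminary fact, are the $2$-cells, each attached along a combinatorial loop of positive arcs through collapsed points. Therefore $\chi(X)=T-M+N$, and equating the two expressions yields the desired identity $M=N+T-1$.

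The only non-formal step is the cycle-freeness of $\overline{\cN_U}$, which is the main obstacle; the rest reduces to a routine Euler characteristic computation. A minor delicate point is to verify that the collapsed space really carries the CW structure described above — i.e., that the attaching maps of the $\omega_j$ descend to well-defined loops on the $1$-skeleton — but this follows from the simple connectedness of each $\omega_j$ together with the finiteness of $M$ assumed in \eqref{e.sol reg}.
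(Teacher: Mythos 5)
Your proof takes a genuinely different route from the paper's. The paper argues by induction on $T$: for $T=1$ it observes that $\overline{\cN_U}$ is connected iff each $\overline{\omega_j}\cap\pa\D$ is connected, giving $M=N$; for the inductive step it joins two components of some disconnected $\overline{\omega_{j_0}}\cap\pa\D$ by an arc $\gamma\subset\omega_{j_0}$ and applies the induction hypothesis on the two resulting subdomains. You instead compute the Euler characteristic of the quotient $X=\overline\D/\overline{\cN_U}$ in two ways, which requires the two structural facts that each $\omega_j$ is a topological disk and that each component of $\overline{\cN_U}$ is contractible. The tree-ness argument (maximum principle applied to each $u_j$ inside a putative cycle, combined with the empty interior of $\cN_U$) is a nice observation not isolated in the paper, and your Euler-characteristic viewpoint ties in naturally with the index formula of Proposition \ref{pr:formula}, which the paper also derives from Euler's relation for the planar graph of nodal arcs. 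Morally the two proofs encode the same combinatorics, but yours packages it topologically rather than by cut-and-induct.

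Two steps deserve more care than you give them, though neither is fatal. First, the one-line justification of simple connectedness of $\omega_j$ is incomplete: a bounded complementary component of $\R^2\setminus\omega_j$ could a priori lie entirely in $\cN_U$ without engulfing any $\omega_k$, so Corollary \ref{c.reg1} alone does not conclude; one must rule this out by the same empty-interior/regular-point argument used for the cycle (a regular arc point of $\cN_U$ in the enclosed region would have a species other than $j$ on one side). Second, both the homotopy equivalence $X\simeq\overline\D$ and the CW count presuppose that $(\overline\D,\overline{\cN_U})$ is a CW pair and that each $\pa\omega_j$ is locally connected (so that, via Carath\'eodory, the Riemann map of $\omega_j$ extends continuously and provides the attaching map of the $2$-cell). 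This amounts to $\overline{\cN_U}$ being a finite graph, which is stronger than the stated hypotheses $M<\infty$ and \eqref{e.sol reg2}; it is true in the setting of interest but should be either proved or explicitly assumed. To be fair, the paper's own proof of Proposition \ref{pr:formula} rests on the same tacit finiteness, so this is a shared rather than a new gap.
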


\begin{proof}
The proof is easily done by induction. The case $\overline{\cN_U}$ connected,
i.e. $T=1$, follows from the observation that $\overline{\cN_U}$ is connected
if and only if $\overline \omega_j \cap \partial \D$ is connected, 
i.e. $M=N$.

If the formula is proven for any  $T'< T$, then it follows for $T$. 
Indeed, if $\cN_U$ is not connected, there exists $j_0\in\{1,\ldots,N\}$ 
such that $\overline \omega_{j_0} \cap \partial \D$ is not connected. 
One can then consider a curve $\gamma$ joining two of the connected
components of $\overline \omega_{j_0} \cap \partial \D$ (see Figure \ref{f_index}) and create
two domains $\Omega_1$ and $\Omega_2$ homeomorphic to $\D$ with
\[
\# \{\textup{connected components } \overline{\cN_U}\cap \Omega_\ell\}
=:T_\ell< T \qquad \ell=1, 2.
\]
We can than use the inductive hypothesis and deduce that
\[
M_\ell= N_\ell + T_\ell - 1 \quad \ell=1, 2,
\]
with $M_\ell$ and $N_\ell$ the number of connected components of 
$\{U>0\}\cap \partial \Omega_\ell$ and the number of species in 
$\Omega_\ell$, respectively.
We have that $M=M_1+M_2$, while $N=N_1+N_2-1$, because the set 
$\omega_{j_0}$ intersect both $\Omega_1$ and $\Omega_2$. Summing the two equations
\[
M = M_1+M_2= N_1+N_2+T_1+T_2-2 = N+1 + T- 2 = N+T-1.
\]
\end{proof}
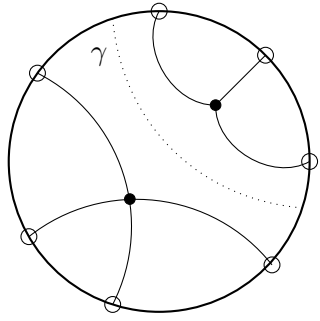
\begin{figure}[h!]
\begin{center}
\begin{tikzpicture}
\draw[thick] (0,0) circle (2);
\coordinate (A) at (0,2);
\coordinate (H) at (1.414,1.414);
\coordinate (E) at (2,0);
\coordinate (B) at (-1.618, 1.17557);
\coordinate (C) at (-1.732, -1);
\coordinate (D) at (-.618, -1.9021);
\coordinate (F) at (1.5,-1.37557);
\coordinate (G) at (.75,.75);
\draw (B) to [bend left=45] (D);
\draw (C) to [bend left=45] (F);
\draw (A) to [bend right=60] (G);
\draw (E) to [bend left=60] (G);
\draw (H) to (G);
\draw[dotted] (-.618,1.902) to [bend right=40] (1.9021,-.618);
\draw (A) circle (3pt);
\draw (B) circle (3pt);
\draw (C) circle (3pt);
\draw (D) circle (3pt);
\draw (E) circle (3pt);
\draw (F) circle (3pt);
\draw (H) circle (3pt);
\filldraw[black] (-.39,-.5) circle (2pt);
\filldraw[black] (G) circle (2pt);
\node at (-.8,1.4) {$\gamma$};
\end{tikzpicture}
\caption{\(M=7, N=6, T=2\); \(M_1=4, N_1=4, T_1=1\), 
\(M_2=3, N_2=3, T_2=1\) .}
\label{f_index}
\end{center}
\end{figure}

We consider the planar graph $(\mathcal V,\mathcal E)$ where the set 
of vertices $\mathcal V$ are
the points in $\cC_U$ (the critical points belonging to the nodal set) 
and the zeros of $U$ on $\partial \D$, the edges $\mathcal E$ are the arcs
$\overline{\Gamma}_{ij}$ and $\overline{\gamma}_j$ with 
$i, j\in \{1, \ldots, N\}$.
For every $z\in \mathcal V$ we define the index
\[
i(z):=m_U(z)-2.
\]
Note that for every $z\in\cC_U$ we have that $i(z)=\textup{ord}(f_U;z)$.
The following result is a generalization of the index formula 
proved in \cite{lm2}.

\begin{proposition}\label{pr:formula}
Let \(N\geq 2\) and $U\in \cU_N$ satisfying \eqref{e.sol reg} and 
\eqref{e.sol reg2}. Then,  the following index formula holds
\begin{equation}\label{formula}
\sum_{z\in {\mathcal V}} i(z) = N-T-1,
\end{equation}
where $T$ is the number of connected components of $\overline\cN_U$.
\end{proposition}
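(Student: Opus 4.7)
My plan is to derive the index formula from Euler's formula applied to the planar graph $G_U\subset\ov\D$ whose vertex set is $\mathcal V$ and whose edges are the interface arcs $\ov\Gamma_{ij}$ together with the boundary arcs $\ov\gamma_j$, combined with the identity $M=N+T-1$ just proved. Viewing $G_U$ as a graph on the sphere $\ov\D\cup\{\infty\}$, the faces are the $N$ regions $\omega_1,\ldots,\omega_N$ (each connected by \eqref{e.sol reg2}) and the exterior of $\D$, so $F=N+1$. Moreover $G_U$ is connected: Corollary \ref{c.reg1} forbids any connected component of $\omega_j$ from being compactly contained in $\D$, which in turn forces every connected component of $\ov{\cN_U}$ to touch $\pa\D$ (otherwise it would enclose a bounded region composed of compactly contained $\omega_j$'s); the circle $\pa\D$ then glues these pieces together. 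Euler's formula therefore gives $V-E+F=2$.

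Next I would count degrees. By Proposition \ref{p.reg2}(v), at every interior vertex $c\in\cC_U$ exactly $m_U(c)$ nodal arcs emanate, so the degree of $c$ equals $m_U(c)$. At a boundary vertex $z\in\mathcal V\cap\pa\D$, the local analysis underlying Proposition \ref{p.reg2} (applied in a half-disk at $z$) shows that the $m_U(z)$ species meeting at $z$ partition a neighborhood into $m_U(z)$ angular sectors, producing $m_U(z)-1$ nodal arcs entering $\D$ together with the $2$ adjacent boundary arcs $\ov\gamma$, for a total degree of $m_U(z)+1$. Under hypothesis \eqref{e.sol reg} the set $\mathcal V\cap\pa\D$ consists of the shared endpoints of the $M$ maximal open arcs $\gamma_j$, so $|\mathcal V\cap\pa\D|=M$. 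Double counting incidences:
\[
V=|\cC_U|+M,\qquad 2E=\sum_{c\in\cC_U}m_U(c)+\sum_{z\in\mathcal V\cap\pa\D}\bigl(m_U(z)+1\bigr).
\]

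Inserting $i(z)=m_U(z)-2$ into the Euler relation $V-E+F=2$, a direct simplification yields
\[
-\tfrac12\sum_{z\in\mathcal V} i(z)-\tfrac{M}{2}=1-N,
\]
that is, $\sum_{z\in\mathcal V}i(z)=2N-2-M$. Finally, substituting the identity $M=N+T-1$ established in the preceding lemma produces
\[
\sum_{z\in\mathcal V} i(z)=N-T-1,
\]
as claimed. The main obstacle is of a bookkeeping nature: checking the correct local picture at boundary vertices (in particular the count $m_U(z)-1$ of interior nodal arcs emanating from a boundary vertex of multiplicity $m_U(z)$) and confirming that $|\mathcal V\cap\pa\D|=M$ under \eqref{e.sol reg}. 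Both are handled by the same local representation $U=|\Re F_{z_0,f_U}|$ and the structure theorem for $\cN_U$ already used in Proposition \ref{p.reg2}.
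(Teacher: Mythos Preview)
Your proof is correct and follows essentially the same route as the paper: both arguments apply Euler's formula to the planar graph $(\mathcal V,\mathcal E)$, compute $\sum_{z\in\mathcal V} i(z)=2\#\mathcal E-2\#\mathcal V-M=2N-2-M$ via the degree count (interior vertices have degree $m_U(z)$, boundary vertices have degree $m_U(z)+1$, and $|\mathcal V\cap\pa\D|=M$), and then substitute \eqref{e.MNT}. The only notable addition in your version is the explicit justification that $G_U$ is connected (needed for $V-E+F=2$), which the paper uses tacitly; your argument for this---that a compactly contained component of $\ov{\cN_U}$ would, having no leaves since every interior vertex has degree $m_U\geq 3$, contain a cycle enclosing some $\omega_j$ in contradiction to Corollary~\ref{c.reg1}---is the right one.
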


\begin{proof}
% The case \(N=2\) is simple to treat: indeed, $m(U;p)\leq 2$
% for every $p$ implies by Proposition \ref{p.reg2} that \(\cC_U=\emptyset\)
% and \eqref{formula} is trivially satisfied.
%
% Let \(N\geq 3\).
% % and set
% % \[
% % \Gamma_{\ell j}:=\pa \om_\ell\cap \pa \om_j \cap \{z\in \D: m(U;z)=2\}.
% % \]
% % First of all we want to point out
% % that the set
% % \[\bigcup\overline{\Gamma}_{ij}
% % \cup\pa\mathbb{D},\]
% % that is the union of the interfaces between any  two species and the
% % boundary of the disk forms a planar, connected graph, whose vertices
% % are the points in \(\cC_U\) and the zeros of the boundary datum.
% % From the Euler
% % polyhedral formula
% % %(see for example \cite[Theorem 1.5.2]{j})
% % it follows that
% % \[
% % v-e+f=2,
% % \]
% % where we denote by \(v, e, f\) the number of vertices, edges and faces of the graph, respectively.
% Note that \(f\) is the number of the
% nodal regions \(\omega_i\) plus \(1\), i.e. \(f=N+1\).
%
% In our case we have that
% \[V =\sharp\{\cC_U\}+N,\]
% that is the number of critical points in $\D$ plus the \(N\) zeros of the boundary datum,
% \[E=\sharp\{\Gamma_{ij}\neq \emptyset\}+N,\] the number of the arcs
% \(\Gamma_{ij}\neq\emptyset\) plus the number of the arcs
% \(\{u_i>0\}\subseteq \pa\mathbb{D}\), and \(F\) is the number of the
% nodal regions \(\omega_i\) plus \(1\) so that  \(F=N+1\). Then, it follows
% that
% \[
% E=V+F-2=2N+\sharp\{\cC_U\}-1.
% \]
For any \(z\in\cC_U\) the number \(m_U(a)=i(a)+2\) corresponds
to the number of the arcs \(\Gamma_{ij}\) such that 
\(p\in\overline{\Gamma}_{ij}\), whereas for the others vertices 
\(z\in\pa\mathbb{D}\) it holds that \(m_U(z)+1\) is the number of
the arcs \(\Gamma_{ij}\) such that \(z\in\overline{\Gamma}_{ij}\). Then,
\begin{align*}
 \sum_{z\in \mathcal V} i(z)
& = \sum_{z\in\mathcal{C}_U} \left[m_U(z)-2\right]
  +\sum_{ z\in \mathcal V\cap \pa\mathbb{D}}\left[m_U(z)-2\right] \\
& = \sum_{z\in\mathcal{C}_U}\left[\#\{e\in \mathcal E: z\in {e}\}-2\right]
  +\sum_{z\in \mathcal V\cap \pa\mathbb{D}}\left[\#\{e\in \mathcal E: z\in e\}-3\right] \\
& = \sum_{p\in \mathcal V}
  \left[\#\{e\in \mathcal E: z\in e\}-2\right] -M\\
&=2\# \mathcal E-2\# \mathcal V-M=2\# \mathcal F-4-M\\
&=2(N+1)-4 - M\stackrel{\eqref{e.MNT}}{=}2N-2 -N-T+1 = N-T-1,
\end{align*}
where we used the Euler formula $\# \mathcal E-\# \mathcal V=\# \mathcal F-2$, with $\mathcal F$ the number of faces which equals $N+1$ (the nodal regions plus the complement of $\D$), and we used that each edge is counted twice in the last sum (once for each endpoint).
\end{proof}

\begin{remark}
A consequence of Proposition \ref{pr:formula} is that for every $U\in \cU_N$ satisfying \eqref{e.sol reg} and \eqref{e.sol reg2} one has \(m_U(z)\leq N\) for all \(z\in \mathcal{C}_U\).
%there exists no point $p\in \mathcal{C}_U$ with multiplicity $m(U;p)$ higher than $N$.
\end{remark}

%
% \begin{remark}
% Note that, following \cite{j}, the graph $(V,E)$ is a planar tree, because
% it is acyclic and connected graph. The zeros of the boundary value in
% $\partial \D$ are the leaves of the tree.
% From classical arguments in graph theory it follows that the number of
% the arcs composing \(\mathcal{N}_U\) is \((N+\#\{\cC_U\}-1)\).
% \end{remark}

\section{Untie singular points}\label{s:desingularization}
In this section we provide the main step of the proof of genericity in $\cU$ of functions with only
triple junctions.
Here we consider functions $U\in \cU$ satisfying the  following conditions:
\begin{itemize}
\item[(H1)] The associate holomorphic function $f_U:=I(U)$ extends 
to a holomorphic function in a neighborhood of the unit disk $\D$ 
and therefore its zero set is finite in $\D$: 
\[
\cC_U =\cZ_{f_U}\cap \mathcal{N}_U
=\big\{z_0, z_1, \ldots, z_M\big\}, \qquad M\in \enne.
\]
\item[(H2)] The order of $z_0$ is bigger than $1$:
\[
\textup{ord}(f_U; z_0) = m_0+1\qquad m_0>0,
\]
\item[(H3)] The zeros $z_j$ are in \textit{general position} with 
respect to $z_0$, meaning that\\
i. the distances $|z_j-z_0|$ are all different,\\
% and, in particular, 
% we assume to have named the zero's in such a way that
% \[
% |z_1-z_0|<|z_2-z_0|<\cdots <|z_M-z_0|,
% \]
ii. the half lines $\Gamma_j=\{z_j+t(z_j-z_0):t\geq 0\}$ does
not intersect $\cZ_{f_U}$.
\end{itemize}

Under assumptions (H1)-(H3) we show how to perturbe $f_U$ in 
such a way to find a new segregated state $V$ close to $U$ and a point $\omega_0$ close to $z_0$ satisfying
\[
\cC_V= \{\omega_0, z_0, \ldots, z_M\},
\]
and
\begin{gather*}
\textup{ord}(f_V;\om_0)=1, \quad \textup{ord}(f_V;z_0)=m_0,\\
\textup{ord}(f_V;z_j)= \textup{ord}(f_U;z_j) \quad \forall\; j=1,\ldots, M.
\end{gather*}
% with $\textup{ord}(f_V;\om_0)=1$, $\textup{ord}(f_V;z_0)=m_0$ and
% \[
% \textup{ord}(f_V;z_j)= \textup{ord}(f_U;z_j) \quad \forall j=1,..., M.
% \]

Before entering the details of the proof of the main result of the
present section, we give, in subsection \ref{ex.no U}, an example 
which shows the kind
of ``desingularization" one can observe in Figure \ref{figtype2}, 
although possible, can only happen by choosing the directions
connecting the newly generated critical points in a very rigid way.

\subsection{An example of rigidity}\label{ex.no U}
The set of holomorphic functions with
simple zeros is trivially generic. However, the
subset of holomorphic functions belonging to $I(\cU)$ is highly non-generic.
This fact can be appriciated by a simple example. Consider the harmonic 
function \(U\) and its holomorphic Hopf differential
\[
U(z) = \left|\Re\left(\tfrac{2}{5} z^{5/2}\right)\right|,
\qquad
I(U) =U_z^2 = f = \tfrac{1}{4} z^3,
\]
In order to untangle the critical point \(z=0\), we have to consider the
holomorphic function $f_w$, depending on a complex parameter written in polar for as $w=\eps\ee^{i\phi}\in \ci$,
\[
f_w(z)=\tfrac{1}{4}z (z-w)^2.
% \qquad\hbox{where }
% w=\eps\ee^{i\phi_w}\in \ci
% \hbox{ and } \eps\geq 0.
\]
Clearly, \(\cZ_{f_w}=\{0,w\}\). Consider the square root of \(f_w\)
defined in the simply connected open set 
\(\Omega^-:=\{z=\rho e^{i\theta}\in \ci : 0<\rho<1,  \theta\in (-\pi,\pi)\}\)
\[
f_w^{1/2}(z)=\tfrac{1}{2} z^{1/2} (z-w),
\]
where \(z^{1/2} = \sqrt{\rho} e^{i\frac{\theta}{2}}\) for every 
\(z=\rho e^{i\theta}\) with \(\theta\in (-\pi,\pi)\).
The primitive of \(f_w^{1/2}\) which vanishes in \(0\) is given by
\begin{align*}
 F_w(z)
&= 2\int_0^z {\zeta}^{1/2} (\zeta-w)d\zeta
   =\tfrac{2}{5} z^{5/2}-\tfrac23 z^{3/2}w \\
&=\tfrac{2}{5} \rho^{5/2} e^{i 5\theta/2}-\tfrac{2}{3}\eps \rho^{3/2}
   e^{i\left(3\theta/2+\phi\right)}.
\end{align*}
The trace of \(\Re F_w\) on the boundary of \(\D\) is
\[
G_\phi(\theta)=\tfrac{2}{5}\cos\left(\tfrac{5}{2}\theta\right)
  -\tfrac{2}{3} \eps\cos\left(\tfrac{3}{2}\theta+\phi\right).
\]
It is easy to verify that, for $\eps>0$ sufficiently small, \(G_\phi\) vanishes
exactly in five points on \(\partial\D\). For \(\eps=0\),
\(G_0(\theta) = 0\) if and only if \(\theta=\pi/5 +2k\pi/5\) for \(k=0, \ldots, 4\).

Set now \(U_w(z)=|\Re F_w(z)|\). If \(w=0\), then \(U_0\in\cU_5\) and it is the solution with boundary value \(G_0\) (see Figure \ref{figuraex}, left).
However, for $\eps>0$ this is almost never the case.
Notice that
\[
U_w(0)=0 \quad\textup{and}\quad U_w(w)
=\tfrac{4}{15} |w|^{5/2}\left|\cos\big(\tfrac52 \phi\big)\right|.
\]
Hence, \(U_w(w)=0\) if and only if \(\phi=\frac{\pi}{5} + \frac{2k\pi}{5}\)
for \(k=0, \ldots, 4\). In this five instances, Proposition \ref{p:from f to U} 
implies that \(U_w\in \cU_5\) and \(f_{U_w} = f_w\) (see Figure \ref{figuraex}, right, to help intuition). In all the other cases,
i.e. \(\phi\neq \frac{\pi}{5}+\frac{2k\pi}{5}\),  we have that
\(w\in\mathcal{Z}_{f_{U_w}}\) but \(w\not\in\cN_{U_w}\), and 
hence \(\cC_{U_w}=\mathcal{Z}_{f_{U_w}}\cap\cN_{U_w}=\{0\}\). 
From the index formula \eqref{formula} we infer that, if \(U_w\in\cU_5\), 
then the origin should be a point with multiplicity \(5\). This is however 
impossible because 
\[
U_w(z)=|\Re F_w(z)|=\left|\tfrac25 \rho^{5/2} \cos (\tfrac52 \theta)
-\tfrac23 \rho^{3/2} \epsilon \cos(\tfrac32 \theta +\phi_w)\right|
= O(\rho^{3/2})
\]
contradicts \eqref{local} in Proposition \ref{p.reg2}.
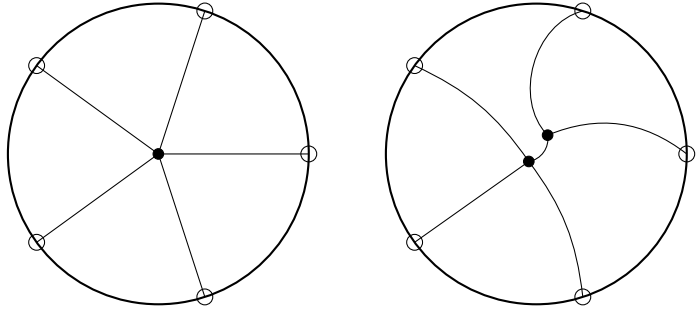
\begin{figure}[h!]
\begin{center}
\begin{tikzpicture}
\draw[thick] (0,0) circle (2cm);
  \coordinate (A) at (0.618, 1.90211);
    \coordinate (B) at (-1.61803, 1.17557);
     \coordinate (C) at (-1.61803, -1.17557);
    \coordinate (D) at (0.618, -1.90211);
     \coordinate (E) at (2,0);
\draw (0,0) -- (A);
\draw (0,0) -- (B);
\draw (0,0) -- (C);
\draw (0,0) -- (D);
\draw (0,0) -- (E);
\draw (A) circle (3pt);
\draw (B) circle (3pt);
\draw (C) circle (3pt);
\draw (D) circle (3pt);
\draw (E) circle (3pt);
\filldraw[black] (0,0) circle (2pt);
\end{tikzpicture}
\qquad
\begin{tikzpicture}
\draw[thick] (0,0) circle (2cm);
\coordinate (A) at (0.618, 1.9021);
\coordinate (B) at (-1.618, 1.17557);
\coordinate (C) at (-1.618, -1.17557);
\coordinate (D) at (0.618, -1.9021);
\coordinate (E) at (2,0);
\coordinate (P1) at (0.15,0.25);
\coordinate (P2) at (-.1,-.1);
\draw (P1) to [bend left=60] (A);
\draw (P1) to [bend left=30] (E);
\draw (P1) to [bend left=45] (P2);
\draw (P2) -- (C);
\draw (P2) to [bend right=15] (B);
\draw (P2) to [bend left=15] (D);
\draw (A) circle (3pt);
\draw (B) circle (3pt);
\draw (C) circle (3pt);
\draw (D) circle (3pt);
\draw (E) circle (3pt);
\filldraw[black] (P1) circle (2pt);
\filldraw[black] (P2) circle (2pt);
\end{tikzpicture}
\end{center}
\caption{Five species: configuration with one 5-point (on the left); 
configuration with one 3-point and one 4-point (on the right).}
\label{figuraex}
\end{figure}

%In other terms, heuristically there are only a finite number of directions which are allowed to desingularizing a higher order zero of $f_U$.
As we will see in the next result, the desingularization
is a global process which involves simultaneously all critical
points.

\subsection{The main lemma}
The Main Theorem on the multiplicity of points $z\in \Omega^-$ for 
the generic solutions $U$ to \eqref{e.dirichlet energy} stated in the 
Introduction requires the following lemma, which proves, roughly speaking, that there exists a suitable small 
perturbation of the function \(U\), which decrease the order of one zero of $f_U$,  leaving all the other unchanged.

For the readers' convenience, we illustrate the main steps of the proof:
\begin{itemize}
\item Step 1: given $U\in \cU$, we consider the corresponding Hopf differential $f_U$ and propose a modification of it which consists in multiplying by a polynomial of several complex variables $w_\ell$ with coefficients holomorphic functions $h_\ell(z)$. 

\noindent The main idea of Step 1 is the introduction of new complex parameters $w_\ell$ which will be chosen in such a way to satisfy the necessary algebraic conditions  \eqref{e.parti reali} in order to produce a new Hopf differential.

\item Step 2: we study the linear system in the new variables  introduced in Step 1 corresponding to the solutions of \eqref{e.parti reali}. A convenient choice for the holomorphic coefficients $h_\ell(z)$ is a sequence of carefully chosen powers $z^{k_\ell}$ with $k_\ell := \ell R$ for a large constant $R$.

\item Step 3 shows that the choices in the previous step allow to prove the invertibility of the linear system guaranteeing \eqref{e.parti reali} in all points except one (corresponding to the new zero of the holomorphic perturbation).
The solution of Step 3 depends on the single parameter corresponding to the position of the new zero of the holomorphic perturbation.

\item Step 4 contains the implicit function argument which ensures that the algebraic equation \eqref{e.parti reali} can be satisfied also in the new zero, while preserving all the other solutions. This step takes care of the rigidity presented in \S \ref{ex.no U}, showing that only finitely many directions are admissible for this final perturbation.

\item The final Step 5 shows that the perturbation we make is arbitrarily small in $H^1(\D)$ and $L^\infty(\D)$. 
\end{itemize}

\begin{lemma}\label{l.perturbazione}
Let $U\in \cU$ with $f_U$ satisfying (H1)-(H3). Then, for every 
$\eps>0$ there exists $U_\eps\in \cU$ such that
\[
\|U-U_\eps\|_{L^\infty(\D)} + \|U-U_\eps\|_{W^{1,2}(\D)}\leq \eps,
\]
and
\begin{gather*}
\cC_{U_\eps}= \{\omega_0, z_0, \ldots, z_M\},\\
\textup{ord}(f_{U_\eps};\om_0)=1, \qquad
  \textup{ord}(f_{U_\eps};z_0)=m_0,\\
\textup{ord}(f_{U_\eps};z_j)= \textup{ord}(f_U;z_j)
  \qquad \forall j=1,\ldots, M.
\end{gather*}
\end{lemma}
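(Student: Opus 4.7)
The plan is to produce $U_\eps$ by perturbing the Hopf differential $f_U$ within $I(\cU)$ to a nearby holomorphic function $f_\eps$ having the prescribed zero configuration, then recovering $U_\eps$ via Proposition~\ref{p:from f to U}. Writing $d_j := \textup{ord}(f_U; z_j)$, we want $f_\eps$ with zeros $\{\omega_0, z_0, z_1, \ldots, z_M\}$ of orders $(1, m_0, d_1, \ldots, d_M)$, all lying in the nodal set of $U_\eps := |\Re F_{z_0, f_\eps}|$. By Proposition~\ref{p:from f to U}, this reduces to solving the $M+1$ real equations
\[
\Re F_{z_0, f_\eps}(z) = 0 \qquad \forall\, z \in \{\omega_0, z_1, \ldots, z_M\}.
\]
At the unperturbed $f_U$ all of these are satisfied, and the task is to keep them holding while inserting the new simple zero at $\omega_0$.

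I would work with a finite-dimensional family of candidates,
\[
f_{w, \vec a}(z) := f_U(z) \cdot \frac{z - z_0 - w}{z - z_0} \cdot \Bigl(1 + \sum_{k=0}^{K} a_k (z-z_0)^k\Bigr),
\]
with $w \in \ci$ small ($\omega_0 := z_0 + w$), $\vec a \in \ci^{K+1}$ small, and $K$ sufficiently large. By (H1) and Hurwitz's theorem, for small parameters the zeros are exactly as prescribed, and $(w, \vec a) = 0$ recovers $f_U$. A direct expansion near $z_0$ yields
\[
F_{z_0, f_{w,\vec a}}(\omega_0) = C\, w^{(m_0+3)/2} + \textup{higher order}, \qquad C \neq 0,
\]
so that the equation $\Re F(\omega_0) = 0$ is genuinely nonlinear in $w$ and admits finitely many admissible directions for $\arg w$, in analogy with the example in \S\ref{ex.no U}. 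Fixing one such direction $\phi_\ast$ and setting $w = r e^{i\phi_\ast}$ with $r>0$, the remaining $M$ constraints become, to leading order in $(r, \vec a)$,
\[
\Re\Bigl(-w\, I_j + \tfrac12 \textstyle\sum_k a_k\, T_{jk}\Bigr) = O(\textup{higher}), \quad j=1,\ldots,M,
\]
where $I_j := \int_{z_0}^{z_j} f_U^{1/2}(\zeta)/(\zeta-z_0)\, d\zeta$ and $T_{jk} := \int_{z_0}^{z_j} f_U^{1/2}(\zeta)(\zeta-z_0)^k\, d\zeta$, the paths chosen to avoid the branch cuts of $f_U^{1/2}$ by virtue of (H3)(ii).

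The hard part is to show that, for $K$ large enough, the $\R$-linear map $\vec a \mapsto \bigl(\Re \sum_k a_k T_{jk}\bigr)_{j=1}^M$ from $\C^{K+1}$ into $\R^M$ is surjective. This is where the general position hypothesis (H3) is essential: the distinct distances $|z_j-z_0|$ from (H3)(i), combined with a Vandermonde-type argument applied to the polynomial weights $(\zeta-z_0)^k$ evaluated along the paths, should decouple the $M$ functionals and establish their $\R$-linear independence. The rigidity illustrated in \S\ref{ex.no U} shows that such a decoupling is not automatic and must be extracted from (H3). Once surjectivity is in hand, the implicit function theorem yields $\vec a = \vec a(r) \to 0$ solving the linear system, with the residual higher-order error at $\omega_0$ absorbed by a small corrective shift $\delta\phi = O(r)$ of the direction. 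The resulting $f_{w, \vec a(r)}$ then lies in $I(\cU)$ by Proposition~\ref{p:from f to U}, and $U_\eps := |\Re F_{z_0, f_{w,\vec a(r)}}|$ has critical set exactly $\{\omega_0, z_0, z_1, \ldots, z_M\}$ with the prescribed orders; the norm estimates follow from continuous dependence of the primitive on $(w, \vec a)$ for $r$ small.
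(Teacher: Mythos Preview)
Your overall strategy coincides with the paper's: perturb $f_U$ multiplicatively by splitting off a simple zero at $\omega_0=z_0+w$ together with a polynomial correction factor, then enforce the constraints $\Re F(z_j)=0$ for $j=1,\ldots,M$ and $\Re F(\omega_0)=0$, the latter yielding finitely many admissible directions for $\arg w$ exactly as in \S\ref{ex.no U}. One technical difference worth noting: the paper uses a \emph{squared} correction $q^2(z,W)=\bigl(1+\sum_{\ell=1}^M w_\ell h_\ell(z)\bigr)^2$ rather than your unsquared factor. This makes $f_{\omega_0,W}^{1/2}$ genuinely linear in $W$, so the $M$ constraints at $z_1,\ldots,z_M$ become an \emph{exact} complex linear system $A(\omega_0)W+B(\omega_0)\in i\,\R^M$, solved explicitly as $W(\omega_0)=A(\omega_0)^{-1}(B(0)-B(\omega_0))$ before one turns to the single scalar equation for $\omega_0$. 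Your linearize-then-IFT route should also work, but the squared factor removes one layer of approximation and decouples the two stages cleanly.

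The genuine gap is precisely the step you flag as ``the hard part'': the surjectivity of $\vec a\mapsto(\Re\sum_k a_k T_{jk})_j$. You appeal to a Vandermonde-type argument via (H3)(i), but consecutive powers $k=0,1,\ldots,K$ do not yield a Vandermonde structure directly, since the $T_{jk}$ are integrals (with integrand vanishing at the endpoints $z_j$), not point evaluations. The paper's device is to choose \emph{widely spaced} exponents $h_\ell(z)=(z-z_0)^{\ell R}$ and send $R\to\infty$: after parametrizing the radial path by $t\in[0,1]$, the probability measures proportional to $t^{\ell R}(1-t)^{q_j/2}\,dt$ weak-$\star$ converge to $\delta_1$, giving the asymptotics $a_{j\ell}\sim(\ell R)^{-1-q_j/2}g_j(1)$ with $g_j(1)\neq 0$. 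The determinant of $(\omega_j^{\ell R}a_{j\ell})$ is then dominated by its identity-permutation term precisely because $|\omega_1|<\cdots<|\omega_M|$ from (H3)(i), forcing $\det A\neq 0$ for $R$ large. This asymptotic decoupling is the actual content behind your heuristic; without it (or an equivalent argument) the invertibility remains unproved.
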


\begin{proof}
We consider the simply connected domain
\[
\Omega^-:= \D \setminus \bigcup_{j=0}^M\Gamma_j,
\]
where $\Gamma_j=\{z_j+t(z_j-z_0):t\geq 0\}$ for $j=1, \ldots, M$ 
and
\[
\Gamma_0:=\{z_0+t\gamma_0: t\geq 0\},\quad \gamma_0\in\ci, \; |\gamma_0|=1
\]
is any half-line not
intersecting $\cup_{j=1}^M\Gamma_j$.
We translate our domain by $-z_0$:
\[
\Omega_0:= \big\{z\in \ci: z+z_0 \in \Omega^-\big\}.
\]
The functions
\[
U_0(z):= U(z-z_0), \quad f_0(z):= f_U(z-z_0)
\]
are defined in $\D_0:=\big\{z+z_0 : |z|<1\big\}$ and satisfy
\[
\cC_{U_0} = \big\{0, \om_1, \ldots, \om_M\big\} 
\quad\textup{with}\quad \om_j:=z_j-z_0 \quad \forall j=1,\ldots, M.
\]
The set $\Omega_0$ is star-shaped and we have that
\[
\Omega_0 = \D_0 \setminus \left(\bigcup_{j=1}^M 
\{t\, \om_j: t\geq 1\}\cup \{t\gamma_0: t\geq 0\}\right).
\]
Note that by assumption (H3) we have that
\begin{equation}\label{e.general}
0<|\om_1|<\cdots <|\om_M|,
\end{equation}
and
\[
\frac{\om_j}{|\om_j|} \neq \frac{\om_\ell}{|\om_\ell|} 
\quad \textup{and} \quad \frac{\om_j}{|\om_j|} \neq \gamma_0 
\qquad \forall j\neq \ell \in \{1, \ldots, M\}.
\]
For the sake of readability we split the proof into different steps.
\medskip

\noindent\textbf{Step 1: modified holomorphic function.}
By (H1) - (H2) the function \(f_0\) can be written in the form
\begin{equation}\label{def.f2}
f_0(z)=z^{m_0+1}\, h^2(z) \, \prod_{j=1}^{M} (z-\om_j)^{q_j},
\qquad q_j\in\enne, \;j=1, \ldots, M,
\end{equation}
with $h:\D_0\to \ci$ a holomorphic function with $h(\omega_j)\neq 0$ for all $j$.
We fix $M$ holomorphic functions \(h_1,...,h_M\) in $\D_0$ to be 
specified later and for every vector \(W=(w_1,...,w_M)\in\mathbb{C}^M\) 
we set
\begin{equation}\label{yyy}
q(z,W):=1+\sum_{\ell=1}^M w_\ell h_\ell(z)\qquad z\in \D_0.
\end{equation}
Let \(\om_0\in \C\) with \(0<|\om_0|<|\om_1|\) and consider the function
\begin{equation}\label{xxx}
f_{\omega_0,W}(z):= z^{m_0} (z-\om_0)\,h^2(z)\,q^2(z,W)\, 
\prod_{j=1}^{M} (z-\om_j)^{q_j} .
\end{equation}
We consider a simply connected domain 
$\Omega_{\omega_0}:= \Omega_0\setminus L_{\omega_0}$, with 
$L_{\omega_0}:=\{\omega_0+t\gamma_0 : t\geq 0\}$. On the simply connected 
domain $\Omega_{\omega_0}$ we consider a determination of the square root
\(f_{\om_0,W}^{1/2}: \Omega_{\om_0}\to \mathbb{C}\) written in the 
following form:
\begin{align*}
f_{\om_0,W}^{1/2}(z)
&:=z^{m_0/2}\,(z-\om_0 )^{1/2}\, H(z)\,q(z,W) \\
&=z^{m_0/2}\,(z-\om_0 )^{1/2}\, H(z)+z^{m_0/2} \,(z-\om_0 )^{1/2}\, 
  H(z)\sum_{\ell=1}^M w_\ell\, h_\ell(z) ,
\end{align*}
with
\[
H(z):=\prod_{j=1}^M (z-\om_j)^{{q_j}/{2}} h(z),
\]
where we fixed suitable determinations of the square roots $z^{m_0/2}$, 
$(z-\omega_0)^{1/2}$ and $(z-\omega_j)^{q_j/2}$ in $\Omega_{\om_0}$.
% satisfying by construction
% \begin{equation}\label{HS}
% H(0)\neq0,\qquad H(z_j)=0,\quad j=1,...,M.
% \end{equation}
% By definition we have that the primitive ${F}_{\om_0, W}$ can
% be computed explicitly as
We set
\begin{align*}%\label{}
{F}_{\om_0, W}(z)&:=2\int_0^z \z^{m_0/2} (\z-\om_0)^{1/2}
  H(\z) d\z \\
&\qquad +2\sum_{\ell=1}^M w_\ell \int_0^zh_\ell(\z)
  \z^{m_0/2} (\z-\om_0)^{1/2} \,H(\z)\, d\z,
\end{align*}
where the integration is taken along curves in $\Omega_{\om_0}$.
By construction, we have that \({F}_{\om_0, W}(0)=0\). Define the matrix 
\(A=A(\om_0 )=\{A_{j,\ell}(\om_0)\}\in \ci^{M\times M}\) as follows
\[
A_{j,\ell}=A_{j,\ell}(\om_0 )
:=2\int_0^{\omega_j}h_\ell(\z) \z^{m_0/2} (\z-\om_0)^{1/2} \,H(\z)\, d\z
 \qquad j,\ell=1,...,M,
\]
and the vector \(B=B(\om_0 )\in \ci^M\) with components
\[
b_j=b_j(\om_0 ):=2\int_0^{\omega_j} \zeta^{m_0/2} 
(\zeta-\om_0 )^{1/2}\, H(\zeta)\, d\zeta
\qquad j=1,...,M.
\]
With these notations
\begin{equation}\label{14}
{F}_{\om_0, W}(\om_j)=b_j+\sum_{\ell=1}^M A_{j,\ell} w_\ell
=(A W +B)_j \qquad j=1,...,M.
\end{equation}

\noindent\textbf{Step 2: invertibility of $A$.}
We show that one can choose \(h_1,...,h_\ell\in \mathcal{H}(\ci)\) 
such that the matrix \(A(0)\) is non-singular and, hence, by continuity 
the same holds for \(A(\om_0)\) if $|\om_0|$ is sufficiently small.

Consider the case
\[
h_\ell(z)=z^{k_\ell},\quad\textup{with}\quad k_1<k_2<...<k_M,
\quad  k_\ell\in\enne.
\]
We integrate along the path \(\zeta=t \om_j\) with \(t\in [0,1]\),
\begin{align*}
  A_{j,\ell}(0)
& =2\int_0^{\omega_j} \z^{\frac{m_0}{2}+\frac12+k_\ell} \prod_{s=1}^M 
  (\z-\om_s)^{q_s/2} h(\z)d\z\\
& =2(\om_j)^{\frac{m_0}{2}+\frac32+k_\ell} \int_0^1 t^{\frac{m_0}{2}
  +\frac{1}{2}+k_\ell}\prod_{s=1}^M (t \om_j-\om_s)^{q_s/2}
  h(t\om_j)dt\\
& =2(\om_j)^{\frac{m_0}{2}+\frac32+Q+k_\ell}  \int_0^1 
  t^{\frac{m_0}{2}+\frac12+k_\ell} \prod_{s=1}^M 
  \left(t -\frac{\om_s}{\om_j}\right)^{q_s/2} h(t\om_j)dt,
\end{align*}
with \(Q=(q_1+\ldots+q_M)/2\). Set
\begin{align*}
  a_{j,\ell}
& =\int_0^1 t^{\frac{m_0}{2}+\frac12+k_\ell}\prod_{s=1}^M 
  \left(t -\frac{\om_s}{\om_j}\right)^{q_s/2} h(t\om_j)dt\\
& =i^{q_j}\int_0^1 t^{k_\ell}(1-t)^{q_j/2} \, t^{\frac{m_0}{2}+\frac12}
  \prod_{s\neq j} \left(t -\frac{\om_s}{\om_j}
  \right)^{q_s/2} h(t\om_j)dt\\
& =\int_0^1 t^{k_\ell}(1-t)^{q_j/2} g_{j}(t)\, dt,
\end{align*}
with
\[
g_{j}(t) := i^{q_j} t^{\frac{m_0}{2}+\frac12}\prod_{s\neq j}
  \left(t -\frac{\om_s}{\om_j}\right)^{q_s/2} h(t\om_j).
\]
Then,
\[
A_{j,\ell}(0)=2(\om_j)^{\frac{m_0}{2}+\frac32+Q+k_\ell} \, a_{j,\ell},
\qquad j,\ell=1,...,M.
\]
The matrix \(A\) is non singular if and only if the following matrix 
is non-singular:
\[
{\mathcal A}=\begin{array}({cccc})
\om_1^{k_1} a_{11} & \om_1^{k_2} a_{12} & ... & \om_1^{k_M}a_{1M} \\ \\
\om_2^{k_1} a_{21} & \om_2^{k_2} a_{22} & ... & \om_2^{k_M}a_{2M} \\
\vdots & \vdots & & \vdots \\
\om_M^{k_1} a_{M1} & \om_M^{k_2} a_{M2} & ... & \om_M^{k_M}a_{MM} \\
\end{array}.
\]
We assume \(k_\ell=\ell\,R\) with \(R>0\) and claim that:
\[
\det \mathcal{A} = \begin{array}|{cccc}|
\om_1^{R} a_{11} & \om_1^{2R} a_{12} & \ldots & \om_1^{M R}a_{1M} \\ \\
\om_2^{R} a_{21} & \om_2^{2R} a_{22} & \ldots & \om_2^{M R}a_{2M} \\
\vdots & \vdots & & \vdots \\
\om_M^{R} a_{M1} & \om_M^{2R} a_{M2} & \ldots & \om_M^{M R}a_{MM} \\
\end{array}\neq 0
\]
for $R$ sufficiently large.

To this aim, we start looking at the functions
\[
m_{j\ell}(t):= t^{lR}\, (1-t)^{q_j/2}.
\]
By an explicit computation we have that
\begin{align*}
M_{j\ell}&:=\int_0^1 m_{j\ell}(t)\, dt 
  = \dfrac{\Gamma(1+\ell R)\,\Gamma(1+q_j/2)}{\Gamma(2
    +\ell R +q_j/2)},
%     \\
% &= (\ell R)^{-1-q_j/2}\, (1+o(1))\qquad \textup{as } R\to +\infty,
\end{align*}
where $\Gamma$ is Euler's Gamma function. In particular, by the 
well-known relations $\Gamma(1+x)=x\Gamma(x)$ and 
$\Gamma(x)\Gamma(x+\frac12)=2^{1-2x}\sqrt{\pi} \,\Gamma(2x)$,
we infer that
\[
M_{j\ell} = (\ell R)^{-1-q_j/2}\big(1+o(1)\big)
\quad \hbox{as }R\to +\infty.
\]
Moreover, we have that
\begin{equation}\label{e.delta}
\mu_{j\ell}:=\frac{m_{j\ell}(t)}{M_{j\ell}} dt  
\stackrel{\star}{\rightharpoonup} \delta_1.
\end{equation}
Indeed, $\mu_{j\ell}$ are probability measures such that for every 
$\tau<1$ we have
\begin{align*}
  \mu_{j\ell}([0, \tau])
& =M_{j\ell}^{-1}\int_0^\tau t^{\ell R}(1-t)^{q_j/2} dt
  \leq M_{j\ell}^{-1}\int_0^\tau t^{\ell R}dt\\
&\leq \frac{\tau^{\ell R}}{(\ell R +1)\, (\ell R)^{1+q_j/2}
  (1+o(1))}\to 0 \qquad\hbox{as }R\to +\infty
\end{align*}
This implies that any weak$^\star$  limit $\mu$ of $\mu_{j\ell}$ 
is zero in $[0,\tau]$ for every $\tau<1$. Since the space of 
probability measures on a compact set is weak$^\star$ closed, 
we infer \eqref{e.delta}. In particular, we have that
\begin{align}\label{e.asintotica a}
  \lim_{R\to +\infty} (\ell R)^{1+q_j/2} a_{j\ell}
& = \lim_{R\to +\infty}M_{j\ell}^{-1}\,a_{j\ell}\notag\\
& =\lim_{R\to +\infty}M_{j\ell}^{-1}\int_0^1 t^{k_\ell}
  (1-t)^{q_j/2} g_{j}(t)\, dt= g_j(1).
\end{align}
We can now conclude the proof of the invertibility of $\mathcal{A}(0)$.
We use the Leibnitz formula for the determinants
\begin{align*}
  \det {\mathcal A}
& = \sum_{\sigma}\hbox{sgn}(\sigma) \prod_{j=1}^M  
  \om_j^{\sigma(j) R}a_{j \sigma(j)}\\
& = \om_1^R \om_2^{2R}
  \cdot ... \cdot\om_M^{M R} \Bigg[a_{11} ... a_{MM} \\
&\qquad +\sum_{\sigma\neq (1,2, ... ,M)} \hbox{sgn}(\sigma)
  \prod_{j=1}^M  a_{j \sigma(j)} \dfrac{\om_1^{\sigma(1) R}
  \om_2^{\sigma(2) R}\cdots\om_M^{\sigma(M) R}}
  {\om_1^R \om_2^{2R}\cdots\om_M^{M R}} \Bigg] ,
\end{align*}
where as usual \(\sigma=(\sigma(1),...,\sigma(M))\) is a permutation 
of \((1,...,M)\) and \({\rm sgn}(\sigma)\) denotes its sign.
%, i.e. it has value \(+1\) if the permutation can be obtained with an even number of exchanges of two entries and \(-1\) otherwise.
Using
\eqref{e.asintotica a} in the form
\[
a_{j\ell} = (\ell R)^{-1-q_j/2}g_j(1) (1+o(1)),
\]
we get (recall that $Q=q_1/2+\ldots +q_M/2$)
\begin{align*}
  \det {\mathcal A}
& =\om_1^R \om_2^{2R}  \cdots \om_M^{M R} R^{-M-Q}
  \prod_{j=1}^M g_j(1) \Bigg[\prod_{j=1}^M j^{-1 -q_j/2} \\
& +\sum_{\sigma\neq (1,2,\ldots,M)}
  \hbox{sgn}(\sigma) \prod_{j=1}^M \sigma(j)^{-1-q_j/2}    
  \dfrac{\om_1^{\sigma(1) R}\om_2^{\sigma(2) R} \cdots
  \om_M^{\sigma(M) R}}{\om_1^R \om_2^{2R} \cdots \om_M^{M R}}
  +o(1) \Bigg].
\end{align*}
By the assumption (H3) on the zeros of $f_U$ in general position (i.e. \eqref{e.general}), we deduce that
\[
\lim_{R\to \infty} \frac{\om_1^{\sigma(1) R}\om_2^{\sigma(2) R}\cdots \om_M^{\sigma(M) R}}{\om_1^R \om_2^{2R}  \cdots\om_M^{M R}}=0,\qquad \forall \sigma\neq (1,2,...,M).
\]
Then, taking into account that $g_j(1)\neq 0$, it follows 
that  $\det\mathcal{A}\neq0$ for sufficiently large $R$ 
(depending only on the exponents $q_j$'s), thus concluding 
the proof of the claim.

\medskip

\noindent \textbf{Step 3: $\Re F_{\om_0,W}(\om_j)=0$ for $j=1, \ldots, M$.}
We prove that it is possible to find a column vector \[W=W(\om_0 )=\big(w_1(\om_0 ),...,w_M(\om_0 )\big),\]
such that the conditions \eqref{e.parti reali} are satisfied.
%\begin{equation}\label{15}
%\Re \widetilde{F}(z_j)=0,\quad j=1,...,M.
%\end{equation}
Keeping in mind \eqref{14}, conditions \eqref{e.parti reali} can be written
in the single equation
\begin{equation}\label{16}
A(\om_0) W(\om_0 ) + B(\om_0 )=i \Lambda(\om_0 ),
\end{equation}
with an arbitrary \(\Lambda(\om_0)\in \erre^M\).
Note that, since for $\om_0=0$ we have that $f_{0,0}=f_0$,
then $F_{0,0}$ satisfies \eqref{e.parti reali}, i.e.,
\begin{align*}
\big(F_{0,0}(\om_j)\big)_{j=1, \ldots, m} =: B(0) \in i\, \erre.
\end{align*}
Since the matrix \(A\) is non-singular, the system \eqref{16} is then solved by
\begin{equation}\label{W}
W(\om_0 )=(A(\om_0 ))^{-1}(B(0)-B(\om_0 )).
\end{equation}
In particular, the function $\om_0\mapsto W(\om_0)$ is continuous and
\[
\lim_{\om_0\to 0} W(\om_0)=W(0)=0.
\]

\medskip

\textbf{Step 4: $\Re {F}_{\om_0, W(\om_0)}(\om_0 )=0$.}
We show that it is possible
to choose $\om_0$
%(and the corresponding vector $W(\om_0)$ )
such that also the real part of ${F}_{\om_0, W(\om_0)}(\om_0)$ 
is zero.

In order to compute $F_{\om_0, W(\om_0)}$ we integrate along 
the path \(\z=t \om_0\), \(0\leq t \leq 1\) and we get
\[
F_{\om_0, W(\om_0)}(\om_0)=\int_0^1 (t\om_0)^{m_0/2} 
((t-1)\om_0)^{1/2} H(t\om_0) q(t\om_0, W(\om_0))\om_0 dt.
\]
We consider \(\om_0=\eps \ee^{i \vth}\) and
\begin{align*}
G(\eps,\vth)
&:=\Re F_{\eps \ee^{i \vth}, W(\eps \ee^{i \vth})}
(\eps \ee^{i \vth}) \\
&=\eps^{\frac{m_0+3}{2}}\Re \left(i\ee^{\frac{i (m_0+3) \vth}{2}}
\int_0^1 t^{m_0/2}\sqrt{1-t} Z(t\eps \ee^{i \vth}) dt\right),
\end{align*}
with
\[
Z(t\eps \ee^{i \vth}) 
:= H(t\eps \ee^{i \vth})
q(t\eps \ee^{i \vth},W(t\eps \ee^{i\vth})).
\]
Set
\[
K(\eps,\vth):=\eps^{-\frac{m_0+3}{2}} G(\eps,\vth)
= \Re\left(i\ee^{\frac{i (m_0+3) \vth}{2}} \int_0^1 
t^{m_0/2}\sqrt{1-t} Z(t\eps \ee^{i \vth})\, dt\right).
\]
We show that the equation \(K(0,\vth)=0\) has a finite number of 
roots. Indeed, we write $Z(0)=|Z(0)| \ee^{-i \varphi}$ and
\[
c_{m_0}=\int_0^1 t^{m_0/2} \sqrt{1-t}dt
=\frac{\sqrt{\pi}}{2} 
\dfrac{\Gamma\big(1+\frac{m_0}{2}\big)}{\Gamma\big(\frac{5+m_0}{2}\big)}.
\]
Then
\[
K(0,\vth) = |Z(0)| c_{m_0} 
\Re\left(i e^{i\big( \frac{(3+m_0)\vth}{2}-\varphi \big)}\right)
= -|H(0)| c_{m_0} \sin\left(\frac{3+m_0}{2}\vth-\varphi \right).
\]
Since $H(0)\neq 0$, we infer that
\begin{align*}
K(0,\vth)=0 &\iff \sin \left( \frac{3+m_0}{2}\vth-\varphi \right)=0\\
& \iff  \frac{3+m_0}{2}\vth-\varphi=k\pi,\quad k\in\zzz,
\end{align*}
that is the zeros of $K(0,\vth)$ are
\[
\vth_k=\frac{2\varphi+2k\pi}{3+m_0},\quad k=0,...,2+m_0.
\]
Let us fix an index \(k\in\{0,...,2+m_0\}\).
We have
\begin{align*}
\frac{\partial K(0,\vth)}{\partial \vth}\Big|_{(0,\vth_k)}
=- \frac{3+m_0}{2}c_{m_0}|H(0)| 
\cos\left( \frac{3+m_0}{2}\vth_k-\varphi \right)\neq 0,
\end{align*}
because $H(0)\neq 0$.
Given the regularity of $K(\eps, \vth)$, for the implicit function theorem 
there exists a function \(\vth_k(\eps)\), \(\eps\in(-\eps_0,\eps_0)\) with
sufficiently small \(\eps_0>0\), such that
\[
\vth_k(0)=\vth_k,\qquad K(\eps,\vth_k(\eps))=0.
\]
It follows that there exist a finite number of directions \(\vth_k(\eps)\)
such that, if we set \(\om_0(\eps)=\eps \ee^{i \vth_k(\eps)}\), condition
\[
\Re F_{\om_0,W(\om_0)} (\om_0) = 0
\]
is satisfied.

\medskip

\textbf{Step 5: conclusion of the proof.}
For $\eps>0$ sufficiently small, we consider a solution 
$\om_0(\eps)$. Then, 
$f_{\eps}(z):=f_{\om_0(\eps), W(\eps)}(z+z_0)\in \mathcal{H}$.
By construction we have that 
$\{z_0, z_0+\om_0, z_1,..., z_M\}\subseteq \mathcal{Z}_{f_\eps}$ 
and
\[
\textup{ord}(f_\eps; z_j)=\textup{ord}(f_U; z_j) 
\quad \forall\; j=1,..., M,
\]
and $\textup{ord}(f_\eps; z_0)=m_0$ and 
$\textup{ord}(f_\eps; z_0+\om_0)=1$.
Since $W(\om_0(\eps))\to W(0)=0$, we have also that
\begin{equation}\label{e.vicinanza f}
\lim_{\eps\to 0^+}\|f_\eps - f_{U}\|_{L^\infty(\D)} =0.
\end{equation}
%By construction $\cZ_{f_\eps} = \cZ_{f_U}\cup \{\omega_0\}$ and the order of such zeros coincide except for $z_0$.
Moreover, from $\cZ_{f_U}^\textup{odd}\subset \cZ_{f_U}\cap \cC_U$, we have
\(
\cZ_{f_\eps}^\textup{odd}\subset\{\omega_0, z_0, \ldots, z_M\}
\).
Set $F_{\eps}:= F_{\om_0(\eps), W(\om_0(\eps))}$. By Steps 3 and 4 $\Re F_{\eps}(z)=0$
for all $z\in \cZ_{f_\eps}^\textup{odd}$. By Proposition \ref{p:from f to U} to infer that there exists a function $U_\eps=|\Re F_{\eps}|\in \mathcal{U}$ such that $f_\eps = I( U_\eps)$.

Notice that, since $U_\eps (z_0)=0$ and \eqref{e.vicinanza f} holds,
we have $U_\eps\to U$ uniformly. Furtheromore, $|\nabla U_\eps|^2 \to |\nabla U|^2$ in $L^1$, which follows from $f_\eps\to f_U$ in $L^1$. In particular,
$\nabla  U_\eps \rightharpoonup \nabla U$ in $L^2$ and the Dirichlet energies converge, thus implying $U_\eps\to U$ in $H^1(\D)$.

% Fix now $r<1$ with $\cC_U\subset \D_r$ and notice that the
% by construction, if $\eps>0$ is sufficiently small, then
% $\cC_{\overline U_{\eps}}\cap \D_r=\{\omega_0, z_0,...,z_M\}$,
% because by the locally uniform convergence
% $f_{\overline U_\eps}\to f_U$ and
% $\overline U_\eps \to U$ there cannot be locally
% in $\D$ any other critical point.
%
% Consider disjoint circles $C_j=\D_{s_j}(z_j)$ contained in
% $\D$ for $j=0,..., M$. If $\eps>0$ is small enough, then
% $\omega_0\in C_0$. By Proposition \ref{p.reg2} (ii) we have
% that $\partial C_j$ meets $m_j$ species for $j=1,...,M$,
% while $\partial C_0$ meets $m_0+1$ species. Moreover, the
% nodal set inside $C_j$ is made by $m_j$ arcs meeting at $z_j$
% for every $j=1,..., M$, whereas in the case of $C_0$ we have
% $m_0$ arcs meeting at $z_0$, three at $\omega_0$, with one of
% such arc from $z_0$ to $\omega_0$ in commong (in particular,
% $(m_0-1)+(3-1) = m_0+1$ meets $\partial C_0$).
% Away from the circles $C_j$ in $\D_r$ there are no more
% critical points and the nodal set is made of smooth arcs.
%
% It follows that, if $U\in \cU_N$ with $N\geq 2$, then
% $\overline U_\eps\cap \cU_N(\D_r)$. In principle, we cannot
% exclude that new species emerge close to the boundary, i.e.
% in $\D\setminus \overline\D_r$.
% Nevertheless, if we consider the function
% \[
% U_\eps(x):=\overline U_\eps (rx) \quad x\in \D,
% \]
% it is clear that $ U_\eps\in \cU_N$ if $U\in \cU_N$ and $\|U- U_\eps\|_{H^1(\D)}\to 0$ if $r\to 1$ and $\eps\to 0$ accordingly.
\end{proof}

\section{Generic segregated states}\label{s:generic}

In this section we prove Theorem \ref{t:main} on the genericity of functions $U\in \cU$ with only triple junctions.
By \eqref{formula-x}, we need to prove
that the subset of functions $U\in \mathcal{U}$ such that $I(U)$ 
has only simple zeros is the intersection of open dense subsets.

\subsection{Finite number of critical points}
First of all we show that the functions with only finitely many 
critical points constitute a dense set of $\cU$.

\begin{lemma}\label{l.density}
The set of $U\in \cU$ such that the corresponding  holomorphic 
functions $f_U$ have finitely many zeros is dense in $\cU$.
\end{lemma}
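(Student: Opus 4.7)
I would prove density by an elementary dilation argument: given $U \in \cU$, approximate it by $U_r(z) := U(rz)$ for $r \nearrow 1^-$. The virtue of this construction is that the Hopf differential of $U_r$ extends holomorphically past $\overline{\D}$, which alone forces its zero set in $\overline{\D}$ to be finite.

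First I would verify that $U_r \in \cU$. Writing $U = u_1 + \cdots + u_N \in \cU_N$, the rescaled components $u_j^{(r)}(z) := u_j(rz)$ are defined on $\D$ (since $r\D \subset \D$) and inherit all the defining conditions of $\cU_N$: $H^1$ membership, non-negativity, non-triviality, the a.e.\ disjointness $u_j^{(r)} u_k^{(r)} = 0$, and the distributional inequalities $\Delta u_j^{(r)} \geq 0$ and $\Delta\bigl(u_j^{(r)} - \sum_{k\neq j} u_k^{(r)}\bigr) \leq 0$ (the Laplacian only picks up a positive prefactor $r^2$). Hence $U_r \in \cU_N \subseteq \cU$. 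Second, the chain rule yields $f_{U_r}(z) = r^2 f_U(rz)$; as $f_U \in \cA^1$ is holomorphic on $\D$, the right-hand side is holomorphic on the strictly larger disk $\D_{1/r}$, a neighborhood of the compact set $\overline{\D}$. Its zeros in $\overline{\D}$ are therefore isolated, and by compactness finite in number (the trivial case $f_U \equiv 0$ being separately obvious).

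Third, I would establish $U_r \to U$ in the natural topology of $\cU$ as $r \to 1^-$. The substitution $w = rz$ gives
\[
\int_\D |\nabla U_r(z)|^2\, dz = \int_{\D_r} |\nabla U(w)|^2\, dw \xrightarrow[r \to 1^-]{} \int_\D |\nabla U(w)|^2\, dw
\]
by monotone convergence, and combined with the standard fact that the dilation operator $f \mapsto f(r\cdot)$ converges to the identity on $L^2(\D)$ as $r \to 1^-$ (by density of $C_c^0(\D)$ and the uniform bound $r^{-1}$ on its norm, applied to both $U$ and $\nabla U$), one obtains $U_r \to U$ in $H^1(\D)$. Since $U$ is moreover locally Lipschitz by Corollary \ref{c.reg1}, $U_r \to U$ uniformly on every compact subset of $\D$, which is the form of convergence relevant to tracking the discrete critical set $\cC_{U_r}$ near $\cC_U$ in the proof of Theorem \ref{t:main}. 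There is no substantive obstacle here: the argument rests only on the scale-invariance of the conditions defining $\cU$ together with an elementary change of variables, and the point of the construction — gaining holomorphicity of $f_{U_r}$ on a genuine neighborhood of $\overline{\D}$ — is built into the dilation from the start.
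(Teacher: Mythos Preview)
Your proof is correct and is essentially the same as the paper's: the paper defines $U_\eps(z):=U\bigl(z/(1+\eps)\bigr)$, which is exactly your $U_r$ with $r=1/(1+\eps)$, and then observes that $f_{U_\eps}$ is holomorphic in a neighborhood of $\overline\D$ so its zero set in $\D$ is finite, together with $U_\eps\to U$ in $H^1(\D)$. Your write-up is somewhat more detailed (in verifying $U_r\in\cU$ and the $H^1$ convergence), but the idea is identical.
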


\begin{proof}
Given any function $U\in\cU$, it is enough to consider the functions
\[
U_\eps(z):=U\left(\frac{z}{1+\eps}\right) \qquad z\in \D.
\]
Then, clearly $U_\eps\in \mathcal{U}$ and its squared Wirtinger 
derivative satisfies
\[
f_{U_\eps}(z):=\big(\partial_z U_\eps\big)^2 = (1+\eps)^{-2} f_U
\left(\frac{z}{1+\eps}\right).
\]
Therefore, since the set of its zeros of the holomorphic function 
$f_U$ is locally finite on compact subsets of $\D$, we deduce 
in particular that
\[
\#\cZ_{f_{U_\eps}}
=\#\big(\cZ_{f_{U}}\cap \D_{\frac1{1+\eps}}\big)<+\infty.
\]
Moreover, by the very definition we also have that
\[
\|U_\eps -U\|_{H^1(\D)}\to 0 \qquad \textup{as}\qquad  \eps\to 0.
%\sharp\cZ_{f_{U_\eps}}  =\sharp\left(\cZ_{f_{U}}\cap \D_{\frac1{1+\eps}}\right)<+\infty.
\]
\end{proof}

\begin{remark}
Considering that $\mathcal{C}_{U_\eps}\subseteq \mathcal{Z}_{f_{U_\eps}}$, we conclude that the set of functions with finitely many critical points is dense in $\cU$.
\end{remark}

\subsection{Points in general position}
In the perspective of proving a density result, we can then assume that the number of critical points of \(U\in \mathcal{U}\) is finite, say
\[
\mathcal{C}_U = \big\{z_0, z_1, \ldots, z_M\big\}, \quad M\in \enne.
\]
We show in the next lemma that there exists a conformal diffeomorphism $\phi$ of the unit disk such that the image of the critical set is in general position with respect to $\phi(z_0)$ as defined in (H3) in Section \ref{s:desingularization}.

%\begin{definition}
%A set of points $\big\{p_0, p_1, \ldots, p_M\big\}\subset\D$ is said \emph{in general position with respect to $p_0$} if the following two conditions are satisfied:
%\begin{itemize}
%\item[i.] $|p_j-p_0|\neq |p_k-p_0|$ for all $j\neq k\in \{1, \ldots, M\}$;
%\item[ii.] the half lines $\Gamma_j=\{p_j+t(p_j-p_0):t\geq 0\}$ %for $j=1, \ldots, M$
%are pairwise disjoint.
%\end{itemize}
%\end{definition}

%Next we show that, up to a conformal diffeomorphism, any finite set $\{p_0, \ldots, p_M\}$ can be mapped into a set in general position with respect to the image of $p_0$.
We recall that the conformal diffeomorphisms of the unit disk are characterized in terms of two parameters
\[
\phi_{\alpha,\theta}(z)=
e^{i\theta}\dfrac{z+\alpha}{\overline{\alpha}z+1}
\qquad \alpha\in\mathbb{D}, \quad \theta\in\erre.
\]
When $\theta=0$ we write $\phi_{\alpha}$ in place of $\phi_{\alpha, 0}$.

\begin{lemma}\label{l.general position}
Let $\{p_0, \ldots, p_M\}\subset\D$.
Then, there exists $\alpha\in \D$ such that
\[
\big\{\phi_{\alpha}(p_0), \ldots,\phi_{\alpha}(p_M)\big\}
\]
are in general position with respect to $\phi_{\alpha}(p_0)$.
\end{lemma}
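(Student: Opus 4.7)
The plan is to translate conditions (i) and (ii) of the definition into explicit equations in the parameter $\alpha$ and then to show that, for each pair of indices, the set of ``bad'' $\alpha$ forms a proper real-analytic subset of $\D$. Since there are only finitely many pairs, the resulting union has two-dimensional Lebesgue measure zero, and any $\alpha$ in its complement works.

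First, I would derive the standard M\"obius identity
\[
\phi_\alpha(z)-\phi_\alpha(w) \;=\; \frac{(z-w)(1-|\alpha|^2)}{(\bar\alpha z+1)(\bar\alpha w+1)}, \qquad z,w,\alpha\in\D,
\]
from which one gets
\[
\frac{\phi_\alpha(p_j)-\phi_\alpha(p_0)}{\phi_\alpha(p_k)-\phi_\alpha(p_0)}
\;=\; \frac{p_j-p_0}{p_k-p_0}\cdot\frac{\bar\alpha p_k+1}{\bar\alpha p_j+1}.
\]
In these terms, the failure of (i) for a pair $j\neq k$ becomes the polynomial equation
\[
|p_j-p_0|^2\,|\bar\alpha p_k+1|^2 \;=\; |p_k-p_0|^2\,|\bar\alpha p_j+1|^2,
\]
while the failure of (ii) becomes the requirement that
\[
G_{jk}(\alpha)\;:=\;\frac{p_j-p_0}{p_k-p_0}\cdot\frac{\bar\alpha p_k+1}{\bar\alpha p_j+1}
\]
is a positive real, which in particular forces $\Im G_{jk}(\alpha)=0$.

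Next, for each pair $j\neq k\in\{1,\dots,M\}$ I would verify that these conditions cut out proper real-analytic subsets $B^{(1)}_{jk}, B^{(2)}_{jk}\subset\D$. For (i), expanding $|\bar\alpha p+1|^2=1+2\Re(\bar\alpha p)+|\alpha|^2|p|^2$ turns the defining equation into the vanishing of a real quadratic polynomial in $(\Re\alpha,\Im\alpha)$ whose constant, linear, and quadratic coefficients are $|p_j-p_0|^2-|p_k-p_0|^2$, $|p_j-p_0|^2 p_k-|p_k-p_0|^2 p_j$, and $|p_j-p_0|^2|p_k|^2-|p_k-p_0|^2|p_j|^2$; a short case analysis shows that they cannot vanish simultaneously unless $p_j=p_k$, contradicting the distinctness of the critical points. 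For (ii), since $p_j\neq p_k$ the map $\alpha\mapsto G_{jk}(\alpha)$ is a non-constant M\"obius transformation in the antiholomorphic variable $\bar\alpha$, with pole $-1/\bar p_j\notin\D$; as a non-constant holomorphic map (in $\bar\alpha$) it is open, so $G_{jk}(\D)$ is an open subset of $\C$ and cannot lie in $\R$. Hence $\Im G_{jk}$ is a non-trivial real-analytic function on $\D$ and its zero set has real dimension at most one.

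Finally, the bad set $\bigcup_{1\le j<k\le M}(B^{(1)}_{jk}\cup B^{(2)}_{jk})$ is a finite union of proper real-analytic curves in $\D$ and therefore has planar Lebesgue measure zero; any $\alpha$ in its complement gives a disk automorphism $\phi_\alpha$ with the claimed properties. The only non-routine step is the non-triviality check for the defining equations of $B^{(1)}_{jk}$ and $B^{(2)}_{jk}$; both collapse to the distinctness $p_j\neq p_k$, so no genuine obstacle is expected.
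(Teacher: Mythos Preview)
Your argument is correct and takes a genuinely different route from the paper's. You work directly with the algebraic identity for $\phi_\alpha(z)-\phi_\alpha(w)$, translate both conditions into the vanishing of explicit real-analytic functions of $\alpha$, check non-triviality via the distinctness $p_j\neq p_k$, and conclude by a measure-zero union argument. This yields a full-measure set of admissible $\alpha\in\D$.

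The paper instead proceeds in two steps: it first composes with $\phi_{-p_0}$ to send $p_0$ to the origin, and then argues geometrically that any sufficiently small $\alpha$ whose argument avoids the finitely many values $\tfrac12(\mathrm{Arg}\,q_j+\mathrm{Arg}\,q_\ell)+k\pi$ works. The inequalities already satisfied at $\alpha=0$ are preserved by continuity, while collinear triples and equidistant pairs are broken using the elementary facts that $\phi_\alpha$ sends lines through $0$ to circles (when $\alpha$ is off the line) and circles centered at $0$ to circles centered between $0$ and $\alpha$. This is more constructive and pinpoints an explicit small neighbourhood of $\alpha=-p_0$, at the price of a case-by-case geometric discussion. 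Your approach is cleaner and more uniform, gives a stronger conclusion (genericity of admissible $\alpha$), and avoids the continuity/limit argument entirely; the paper's approach, on the other hand, makes the perturbation visibly small, which matches the spirit of how the lemma is used downstream but is not actually needed there.
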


\begin{proof}
First of all we consider the conformal diffeomorphism $\phi_{-p_0}$ and notice that $\phi_{-p_0}(p_0)=0$. Therefore, we can consider without loss of generality points
\[
\big\{q_0, \ldots,q_M\big\}\qquad q_j:=\phi_{-p_0}(p_j), \quad q_0=0.
\]
We will show that the points \(\{\phi_{\alpha}(q_j)\}\) are in general position with respect to $\alpha=\phi_{\alpha}(0)$, provided
\(\alpha\) is small enough and
\begin{gather*}
\mathrm{Arg}(\alpha) \neq
% \begin{cases}
% \mathrm{Arg}(q_j) +\dfrac{k\pi}{2}
% &\forall j=1,...,M,\quad \forall k\in\zzz,\\
\frac{\mathrm{Arg}(q_j) + \mathrm{Arg}(q_\ell)}{2} +k\pi,\qquad \forall j, \ell=1,...,M,\quad \forall k\in\zzz.
%\end{cases}
\end{gather*}
We now verify the claim. By continuity, we have
\(
\phi_{\alpha}(z)\to z\) for $\alpha\to 0$. Hence, there exists \(\delta_0>0\) such that for $|\alpha|<\delta_0$ any triple of non-aligned points in \(\{q_0, \ldots, q_M\}\) remains non-aligned and, similarly,
\[
|q_j-q_0|\neq |q_\ell-q_0|\quad \Longrightarrow \quad |\phi_\alpha(q_j)-\phi_\alpha(q_0)|\neq |\phi_\alpha(q_\ell)-\phi_\alpha(q_0)|.
\]
Furthermore, if there were two points aligned with $0$, i.e. there exists a line \(r\) such that \(q_j,q_\ell,0\in r\), then since $\mathrm{Arg}(\alpha)\neq \mathrm{Arg}(q_j)+k\pi$ it follows \(\phi_{\alpha}(r)\) is a circle and therefore the points
$\phi_{\alpha}(q_j)$ and  $\phi_{\alpha}(q_\ell)$ are not any more aligned with $\phi_{\alpha}(0)$.

Finally, recall that the circles with center the origin are sent by $\phi_\alpha$ to circles with the center on the open segment between $0$ and $\alpha$. Therefore, if two points $q_j, q_\ell$ belong to the same circle around the origin (i.e., have the same distance from $0$), their images are on an other circle whose center is not $\alpha$. The only possibility in order to have the same distance form $\alpha$ is that $q_j$ and $q_\ell$ are symmetric with respect to the line $r=\{t\, \alpha: t\in \erre\}$. But this instance is excluded by the fact that
\[
\mathrm{Arg}(q_j)\neq \frac{\mathrm{Arg}(q_j) + \mathrm{Arg}(q_\ell)}{2} +k\pi, \quad k\in \zzz.
\]
The claim is then proven and we find a conformal diffeomorphism of the disk $\phi_{-p_0+\alpha} = \phi_\alpha \circ \phi_{-p_0}$ which sends $\{p_0, \ldots, p_M\}$ in general position with respect to $\phi_{-p_0+\alpha}(p_0)=\alpha$.
\end{proof}

\subsection{Proof of Theorem \ref{t:main}}
We are now ready to give the proof of the main theorem.
We show that the sets
\[
\mathcal U_r^{\curlyvee} := \Big\{U\in \cU : m_U(z) = 3\quad 
\forall z\in \cC_U\cap \overline \D_r \Big\},\quad r\in (0,1),
\]
are open dense set in $\overline\cU$.
Therefore, the set $\mathcal O$ of functions $U$ with only triple 
junctions
\[
\mathcal{U}^{\curlyvee}:=\bigcap_{r\in (0,1)} {\mathcal{U}}_r^{\curlyvee}
\]
is residual in $\overline\cU$.

\medskip

\textbf{Openness of ${\mathcal{U}}_r^{\curlyvee}$.}
Let $U\in {\mathcal{U}}_r^{\curlyvee}$ be fixed and assume that $U$ has $M$ 
critical points in $\overline \D_r$ which are triple points:
\[
\mathcal C_U \cap \overline \D_r = \big\{z_1, \ldots, z_{M}\}, 
\quad m_U(z_i)=3 \quad \forall\; i=1, \ldots, M.
\]
We show that there exists $\eps>0$ such that for every $V\in \mathcal{U}$ with $\|U-V\|_{H^{1}(\D)}\leq \eps$ we have that
$V\in \mathcal O_r$, i.e. $V$ has only triple points in $\overline \D_r$. We start observing that
\[
\|f_U-f_V\|_{L^1(\D)} = \|U_z^2 - V_z^2\|_{L^1(\D)} \leq C \|U-V\|_{H^{1}(\D)}\leq C \eps,
\]
for a dimensional constant $C>0$.
On the other hand, since the functions $f_U, f_V$ are holomorphic in $\D$, from the Cauchy formula we deduce that for every $s<1$
%compactly supported disk $K\subset\subset\D$
there exists a constant $C(s)>0$ such that
\begin{equation}\label{e:vicinanza}
\|f_U-f_V\|_{C^1(\D_{s})}\leq C(s) \|f_U-f_V\|_{L^1(\D)}\leq C(s)\eps.
\end{equation}
We consider a radius $r<s<1$ such that $\mathcal C_U\cap \D_s\subset\cC_U\cap \overline \D_r$. We show that, if $\eps>0$ is sufficiently small, then $\cC_V\cap \D_{\frac{r+s}{2}}$ is all made of triple junctions too.
%has exactly the same number of zeros of $f_U$ in $\D_{\frac{r+s}{2}}$ and they are all simple.
Indeed, we can consider disjoint circles $C_j=\partial\D_{s_j}(z_j)\subset \D_{\frac{r+s}{2}}$ (oriented counterclockwise) around every $z_j$ for $j=1, \ldots, M$, for a suitable $s_j>0$. Then, by the argument principle we have that
\[
\frac{1}{2\pi i}\oint_{C_j} \frac{f'_U(x)}{f_U(z)}\, dz = 1.
\]
By \eqref{e:vicinanza}, if $\eps>0$ is small enough, then
\[
\left\vert\frac{1}{2\pi i}\oint_{C_j} \frac{f'_V(x)}{f_V(z)}\, dz -
\frac{1}{2\pi i}\oint_{C_j} \frac{f'_V(x)}{f_V(z)}\, dz
\right\vert
\leq C\, \eps,
\]
where the constant $C$ depends on $s<1$ and on $f_U$ (through the 
choice of the circles $C_j$ and the norm of $\|f_U\|_{C^1(\D_s)}$).
Since $\frac{1}{2\pi i}\oint_{C_j}\frac{f'_V(x)}{f_V(z)} dz\in \enne$, 
we conclude that
\[
\frac{1}{2\pi i}\oint_{C_j} \frac{f'_V(x)}{f_V(z)}\, dz=1
\qquad \forall\; j=1, \ldots, M,
\]
i.e., by the argument principle, $f_V$ has a simple zero in 
$\D_{s_j}(z_j)$. Moreover, since $|U|\geq c>0$ in 
$\D_{\frac{r+s}{2}}\setminus \bigcup_{j=1}^M \D_{s_j}(z_j)$ and 
$\|U-V\|_{L^\infty(\D)} =o(1)$ for $\eps \to 0^+$, we also infer that
there are no other critical points of $V$ in $\D_{\frac{r+s}{2}}$, 
thus concluding the proof.
%the number of critical points of $V$ in $\D_{\frac{r+s}{2}}$ is exactly $M$.

%This shows that the set of segregated states in $\mathcal{U}_N$ with only triple junctions in $\overline \D_r$ is an open subset of $\cU$.

\medskip

\textbf{Density of ${\mathcal{U}}_r^{\curlyvee}$.}
By definition $\cU\subset\overline\cU$ is dense. By Lemma \ref{l.density} the set of functions $U\in \cU$ with 
finitely many critical points is dense. Therefore, it is enough 
to show the density of ${\mathcal{U}}_r^{\curlyvee}$ in such set. We fix then 
$U\in \mathcal{U}$ with finitely many critical point and we 
define the index
\[
\alpha_U:=\sum_{z\in\mathcal{C}_U} \big(m_U(z)-3\big).
\]
Note that $U$ has only triple junctions if and only if $\alpha_U=0$.
We show that, given any $U\in \mathcal{U}$ with finitely many 
critical points and $\alpha_U>0$, for every $\eps>0$ there exists 
$U_\eps\in \mathcal{U}$ with
\begin{equation}\label{e:induzione}
\alpha_{U_\eps}=\alpha_U-1 \qquad\textup{and}\qquad 
\|U-U_\eps\|_{H^1(\D)}\leq \eps.
\end{equation}
Iterating the argument $\alpha_U$ times, we find a segregated 
state $V\in \mathcal{U}$ with $\alpha_V=0$, i.e. with only triple 
junctions, and $\|U-V\|_{H^1(\D)}\leq \alpha_U\eps$. By the 
arbitraryness of $\eps$, we hence conclude.

\medskip

In order to prove \eqref{e:induzione}, let
\[
\mathcal{C}_U = \big\{z_0, \ldots, z_{\alpha_U-1}\big\},
\]
and assume that $\textup{ord}(f_U; z_0)=m_0+1$, with $m_0>0<$.
By Lemma \ref{l.general position} we can consider a conformal map 
$\phi:\D\to \D$ such that $\phi(\mathcal{C}_U)$ is in general
position with respect to $\phi(z_0)$. Then, we can apply Lemma
\ref{l.perturbazione} to $U\circ \phi$ and, for every $\eps_0>0$, we find $V\in \cU$
with $\|U\circ \phi - V\|_{H^1(\D)}\leq \eps_0$ and
\[
\mathcal{C}_{V} = \big\{\omega_0, \phi(z_0), \ldots, 
\phi(z_{\alpha_U-1})\big\},
\]
with 
\[
\textup{ord}(f_V; \omega_0)=1,
\quad\textup{ord}(f_V; \phi(z_0))=m_0
\]
and 
\[ 
\textup{ord}(f_V; \phi(z_i))=\textup{ord}(f_U; \phi(z_i))\quad \forall\; i=1, \ldots, \alpha_U-1.
\]
Then, we conclude that $U_\eps:=V\circ \phi^{-1}\in\cU$ with
\[
\|U - U_\eps\|_{H^1(\D)}\leq
C \|U\circ \phi - V\|_{H^1(\D)}\leq C\eps_0,
\]
for a constant $C>0$ depending on conformal diffeomorphism 
$\phi$, and hence on $U$. By the arbitrariness of $\eps_0$, the proof is concluded.\qed

\begin{remark}
The set ${\mathcal{U}}^{\curlyvee}$ is not open in $\cU$. If $U\in {\mathcal{U}}^{\curlyvee}$ 
extends to a holomorphic function in a neighborhood of $\overline\D$ 
and has a critical point with multiplicity higher than $3$ on the
boundary of $\D$, then every neighborhood of $U$ in $\cU$ cannot be 
contained  in ${\mathcal{U}}^{\curlyvee}$: e.g., any traslation moving the critical 
point from the boundary to the interior produces a function 
$V\not\in{\mathcal{U}}^{\curlyvee}$.
\end{remark}

\section*{Acknowledgement}
We would like to thank Giovanni Alessandrini for his insightful comments.  A source of inspiration for our work.

We would like to thank the referee for various suggestions that helped us to improve the presentation and clarify several technical aspects of the paper.

\section*{Funding}
F.L., E.M. and E.S. have been partially supported by Ateneo 
Sapienza n. prot. RM122181668A9AF5 ``Proprietà qualitative di 
soluzioni di sistemi ellittici singolari: funzioni multi-valore, 
sistemi per la segregazione di specie e problemi a frontiera 
libera''.\\
F. L.  acknowledges the support from the project ”Perturbation 
problems and asymptotics for elliptic differential equations: 
variational and potential theoretic methods” funded by the 
European Union - Next Generation EU and by MUR Progetti di 
Ricerca di Rilevante Interesse Nazionale (PRIN) Bando 2022 
grant 2022SENJZ3.\\
E. S. has been partially supported by PRIN 2022PJ9EFL "Geometric 
Measure Theory: Structure of Singular Measures, Regularity Theory 
and Applications in the Calculus of Variations" funded by the 
European Union -  Next Generation EU, Mission 4 Component 2 - 
CUP: E53D23005860006.

\end{document}